\crefname{theorem}{Theorem}{Theorems}
\crefname{proposition}{Proposition}{Propositions}
\crefname{observation}{Observation}{Observations}
\crefname{lemma}{Lemma}{Lemmas}
\crefname{claim}{Claim}{Claims}
\crefname{problem}{Problem}{Problems}
\crefname{conjecture}{Conjecture}{Conjectures}
\crefname{question}{Question}{Questions}
\crefname{example}{Example}{Examples}
\crefname{fact}{Fact}{Facts}
\newcounter{dummy}
\newcommand\myitem[1][]{\item[#1]\refstepcounter{dummy}\def\@currentlabel{#1}}
\newsavebox{\measure@tikzpicture}
	\def\tikz@width{#1}%
\DeclareSymbolFontAlphabet{\mathbb}{AMSb}
\newcommand{\thistheoremname}{}
\newtheorem*{genericthm*}{\thistheoremname}
\newenvironment{namedthm*}[1]
{\renewcommand{\thistheoremname}{#1}%
	\begin{genericthm*}}
	{\end{genericthm*}}
\newcommand{\Bairespace}[1][]{
	\ifthenelse{\equal{#1}{}}{\functions{\N}{\N}}{\functions{#1}{\N}}
}
\newcommand{\bbL}{\mathbb{L}}
\newcommand{\bbX}{\mathbb{X}}
\newcommand{\Cantorspace}[1][]{
	\ifthenelse{\equal{#1}{}}{\functions{\N}{2}}{\functions{#1}{2}}
}
\newcommandx{\concatenation}[2][1 = undefined, 2 = undefined]{
	\ifthenelse{\equal{#1}{undefined}}{{}\smallfrown}{
		\ifthenelse{\equal{#2}{undefined}}{\bigoplus #1}{\bigoplus_{#1} #2}
	}
}
\newcommandx{\functions}[3][3 =]{
	\ifthenelse{\equal{#3}{}}{#2^{#1}}{#2_{#3}^{#1}}
}
\newcommand{\Gzero}[1][]{
	\ifthenelse{\equal{#1}{}}
	{\mathbb{G}_0}
	{\mathbb{G}_{0,n}}
}
\newcommandx{\Hzero}[2][2 = undefined]{
	\ifthenelse{\equal{#2}{undefined}}
	{\mathbb{H}_{#1}}
	{\mathbb{H}_{#1, #2}}
}
\newcommandx{\intersection}[2][1 =, 2 =]{
	\ifthenelse{\equal{#1}{}}{\cap}{
		\ifthenelse{\equal{#2}{}}{\bigcap #1}{{\bigcap_{#1} #2}}
	}
}
\newcommand{\Lzero}[1][]{\ifthenelse{\equal{#1}{}}{\bbL_0}{L_{0, #1}}}
\newcommand{\Lzerospace}[1][]{\ifthenelse{\equal{#1}{}}{\bbX_0}{X_{0, #1}}}
\newcommand{\modulo}[1]{\ (\text{mod } 2)}
\newcommand{\N}{\mathbb{N}}
\newcommandx{\product}[2][1 =, 2 =]{
	\ifthenelse{\equal{#1}{}}{\times}{
		\ifthenelse{\equal{#2}{}}{\prod #1}{{\prod_{#1} #2}}
	}
}
\newcommandx{\sequence}[2][2 = undefined]{
	\ifthenelse{\equal{#2}{undefined}}{(#1)}{
		(#1)_{#2}
	}
}
\newcommandx{\set}[2][2 = undefined]{
	\ifthenelse{\equal{#2}{undefined}}{\{ #1 \}}{
		\{ #1 \suchthat #2 \}
	}
}
\newcommandx{\sets}[3][3 =]{
	\ifthenelse{\equal{#3}{}}{[#2]^{#1}}{[#2]^{#1}_{#3}}
}
\newcommand{\suchthat}{\mid}
\renewcommand{\restriction}[2]{#1 \upharpoonright #2}
\newcommandx{\union}[2][1 =, 2 =]{
	\ifthenelse{\equal{#1}{}}{\cup}{
		\ifthenelse{\equal{#2}{}}{\bigcup #1}{{\bigcup_{#1} #2}}
	}
}
\newcommand{\homo}{Homo}
\newcommand{\inv}{Inv}
\newcommand{\stab}{\mathrm{Stab}}
\newtheorem{theorem}{Theorem}[section]
\newtheorem{lemma}[theorem]{Lemma}
\newtheorem{claim}[theorem]{Claim}
\newtheorem{corollary}[theorem]{Corollary}
\newtheorem{proposition}[theorem]{Proposition}
\newtheorem{problem}[theorem]{Problem}
\theoremstyle{definition}
\newtheorem{definition}[theorem]{Definition}
\newtheorem{remark}[theorem]{Remark}
\numberwithin{equation}{section}
\newcommand{\om}{\mathbb{N}}
\newcommand{\bd}{\begin{definition}}
	\newcommand{\ed}{\end{definition}}
\DeclareMathOperator{\dom}{dom}
\DeclareMathOperator{\dist}{dist}
\DeclareMathOperator{\didistance}{didist}
\newcommand{\mc}{\mathcal}
\newcommand{\distance}[3]{\ifthenelse{\isempty{#3}}{\dist(#1,#2)}{\dist^{#3}(#1,#2)}}
\newcommand{\didist}[3]{\ifthenelse{\isempty{#3}}{\didistance(#1,#2)}{\didistance^{#3}(#1,#2)}}
\newcommand{\digraph}[3]{\ifthenelse{\equal{#1}{b}}{\mathbb{#2}_{#3}}
	{{#2}_{#3}}}
\newcommand{\linegraph}[3]{\ifthenelse{\equal{#1}{b}}{\mathbb{#2}_{#3}}
	{#2_{#3}}}
\newcommand{\underlyingspace}[3]{\ifthenelse{\equal{#1}{b}}{\mathbb{#2}_{#3}}
	{#2_{#3}}}
\newcommand{\distanceset}[2]{\ifthenelse{\isempty{#2}}{D(#1)}{D^{#2}(#1)}}
\newcommand{\ovl}{\overline}
\newcommand{\cpct}[1]{K_{\mathcal{#1}}}
\newcommand{\cpcth}{K_{K_3}}
\newcommand{\wtilde}{\widetilde}
\newcommand{\fD}{\mathcal{D}}
\newcommand{\fE}{\mathcal{E}}
\newcommand{\fF}{\mathcal{F}}
\newcommand{\fH}{\mc{H}}
\newcommand{\fP}{\mathcal{P}}
\newcommand{\fU}{\mathcal{U}}
\newcommand{\fX}{\mathcal{X}}
\newenvironment{manualtheorem}[1]{%
	\manualtheoreminner
}{\endmanualtheoreminner}
\author{Tam\'as K\'atay}
\address{E\"otv\"os Lor\'and University, Institute of Mathematics, P\'azm\'any P\'eter stny. 1/C, 1117 Budapest, Hungary}
\email{13heted@gmail.com}
\author{L\'aszl\'o M\'arton T\'oth}
\address{Alfr\'ed R\'enyi Institute of Mathematics, Reáltanoda u. 13-15., 1053 Budapest, Hungary}
\email{toth.laszlo.marton@renyi.hu}
\author{Zolt\'an Vidny\'anszky}
\address{E\"otv\"os Lor\'and University, Institute of Mathematics, P\'azm\'any P\'eter stny. 1/C, 1117 Budapest, Hungary}
\email{zoltan.vidnyanszky@ttk.elte.hu}
\thanks{The first and third authors were supported by Hungarian Academy of Sciences Momentum Grant no. 2022-58 and National Research, Development and Innovation Office ( NKFIH) grants no.~113047, ~129211. The second author was supported by the ERC Consolidator Grant 772466 ''NOISE'', and the NKFIH grant KKP-139502, ''Groups and graph limits''.}
\begin{document}

	\thanks{}
	
	\keywords{CSP Dichotomy, Axiom of Choice, compactness, cyclic polymorphism}
	
	\subjclass[2020]{Primary 03E25, Secondary 68Q17}
	
	\title[]{The CSP Dichotomy, the Axiom of Choice, and Cyclic Polymorphisms}

	

	\begin{abstract}
    We study Constraint Satisfaction Problems (CSPs) with infinite instances and finite templates. We show that the dichotomy between easy and hard problems -- established by Bulatov and Zhuk in the finite case -- presents itself as the strength of the corresponding De Bruijn-Erdős-type compactness theorem over ZF.
		
    More precisely, if $\fD$ is a finite structure, let $K_\mc{D}$ stand for the following statement: for every structure $\mc{X}$ if every finite substructure of $\mc{X}$ admits a homomorphism to $\mc{D}$, then so does $\mc{X}$. We prove that if $\mc{D}$ admits no cyclic polymorphism, and thus the $\mc{D}$ homomorphism problem is NP-complete by the CSP Dichotomy Theorem, then $K_\mc{D}$ is equivalent to the Ultrafilter Lemma (UL) over ZF. Conversely, we also show that if $\mc{D}$ admits a cyclic polymorphism, and thus the $\mc{D}$ homomorphism problem is in P, then $K_\mc{D}$ is strictly weaker than UL.

    We do not use the CSP Dichotomy Theorem to obtain these results. Moreover, in our investigation the need to use cyclic polymorphisms arose naturally from the theory of choiceless models, suggesting a deeper connection between the finite and infinite cases. 
\end{abstract}
	
	\maketitle
	
	\section{Introduction}
	
	By \emph{structure} we mean a relational structure with finitely many relations. For structures $\fX$ and $\fD$ we say that $\fX$ is an \emph{instance of the $\fD$-homomorphism problem}, or, in short, an \emph{instance of $\mc{D}$} if their signature coincides. If $\mc{D}$ is a finite structure, the \emph{(finite) $\mc{D}$-homomorphism problem} is the problem to decide whether a given (finite) instance of $\mc{D}$ admits a homomorphism to $\mc{D}$. These homomorphism problems are also called \emph{Constraint Satisfaction Problems}  (CSPs). 
	
	Examples of CSPs include the problem of finding a proper $n$-coloring of a graph, graph homomorphism problems in general, but also solving various types of SAT problems or systems of linear equations over finite fields (see, e.g., \cite{feder1998computational}).
	
	It is easy to check that CSPs are always in $NP$, and after decades of concerted efforts toward understanding the computational complexity of CSPs, Bulatov \cite{bulatov2017dichotomy} and Zhuk \cite{zhuk2020proof} independently proved the remarkable CSP Dichotomy Theorem, conjectured by Feder and Vardi \cite{feder1998computational}. The separation between hard and easy problems relies on an algebraic property of $\mathcal{D}$, which we abbreviate by $(*)_\mathcal{D}$ (see Subsection \ref{ss:cyclic_polymorphisms} for the details). 
    
	\begin{theorem}[Bulatov, Zhuk]   
		\label{t:dichotomy}
		The $\mc{D}$-homomorphism problem is either in $P$ or $NP$-complete. More specifically:
		\begin{enumerate}
			\item if $(*)_\mathcal{D}$ holds, then the $\mc{D}$-homomorphism problem is in $P$;
			\item if $(*)_\mathcal{D}$ fails, then the $\mc{D}$-homomorphism problem is $NP$-complete.
		\end{enumerate}
	\end{theorem}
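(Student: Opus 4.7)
The plan is to use the \emph{algebraic approach to CSPs}, which reduces the computational question to a purely algebraic property of the polymorphism clone $\mathrm{Pol}(\fD)$. The guiding principle (due to Jeavons, Bulatov, Krokhin, and others) is that the complexity of the $\fD$-homomorphism problem depends only on the clone of operations $f : D^n \to D$ that preserve every relation of $\fD$, and moreover is invariant under \emph{pp-interpretability}: if $\fD$ pp-interprets (or pp-constructs) $\fD'$, then the $\fD'$-homomorphism problem reduces in polynomial time to the $\fD$-homomorphism problem. So the strategy is to translate the condition $(*)_{\fD}$ into a statement about the clone $\mathrm{Pol}(\fD)$ and then exploit it in two opposite directions.

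For the hardness part (2), I would argue: if $\fD$ has no cyclic polymorphism of any prime arity $p > |D|$, then by a Taylor-type characterization (going back to Bulatov--Jeavons--Krokhin and sharpened by Barto--Kozik and Maróti--McKenzie), $\fD$ pp-constructs a finite structure whose CSP is already known to be NP-complete, for instance 1-in-3-SAT or the 2-element structure with all relations. The NP-hardness of the constructed structure is classical, and pp-constructibility transports hardness, so the $\fD$-homomorphism problem inherits NP-completeness (containment in NP is immediate, since a homomorphism is a polynomial-size witness). The main algebraic input here is that failure of $(*)_\fD$ gives access to a ``projection-like'' quotient algebra, which is exactly what is needed to encode Boolean constraints.

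For the tractability part (1), the task is to build a polynomial-time algorithm solving the $\fD$-homomorphism problem whenever $\fD$ admits a cyclic polymorphism. This is by far the deepest step; one would have to follow one of the two independent routes of Bulatov or Zhuk. Bulatov's approach classifies subdirectly irreducible quotients of $\mathrm{Pol}(\fD)$ using \emph{coloured graphs}, and reduces the instance by analysing the bridges between colour classes. Zhuk's approach introduces \emph{strong subuniverses} (center, PC, and linear), and proceeds by either propagating constraints along absorbing subalgebras or branching on the few possible images. Either way, the key lemma is that a cyclic polymorphism yields, on every subalgebra of $\fD^X$ that arises in the algorithm, a nontrivial ``reduction'' — a proper strong subuniverse or a congruence with a tractable quotient — so that after polynomially many steps the instance is either solved directly or split into strictly smaller subinstances.

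The main obstacle is undoubtedly (1): the algorithmic argument requires heavy machinery from universal algebra (tame congruence theory, absorption, centralizers, and the structure theory of finite Taylor algebras), and both Bulatov's and Zhuk's proofs span several hundred pages. By contrast, (2) is a relatively clean consequence of the algebraic theory once Taylor's theorem is available. Since the present paper explicitly states that it does \emph{not} use the CSP Dichotomy Theorem, I would treat Theorem \ref{t:dichotomy} here as a cited black box rather than attempt to reprove it, and simply record the equivalence between $(*)_\fD$ and the existence of a cyclic polymorphism of prime arity larger than $|D|$, which is the formulation actually used later in the paper.
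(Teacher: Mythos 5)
The paper does not prove Theorem~\ref{t:dichotomy}; it is cited directly to Bulatov \cite{bulatov2017dichotomy} and Zhuk \cite{zhuk2020proof} as an external result, and the paper explicitly disavows using it in its own arguments. Your conclusion to treat it as a black box is exactly right, and your high-level survey of the algebraic approach (pp-constructibility transporting hardness for part (2), Bulatov's coloured graphs / Zhuk's strong subuniverses for part (1), and the Barto--Kozik equivalence between cyclic polymorphisms and WNUs that the paper records as Theorem~\ref{t:cyclic_polymorphisms}) is an accurate description of the literature, though none of it appears in or is needed by the paper itself.
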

	
   It is rather natural to consider infinitary generalizations of the theorem above. Note however, that (at least from a certain perspective) there is no difference between deciding $2$-colorability and $3$-colorability of an infinite graph: by the compactness theorem, in both cases it suffices to check colorability of every finite subgraph. This suggests that one has to work in a context where the compactness theorem fails.
	
	In this paper, we investigate such a context, inspired by a remark of Halbeisen. Let $\cpct{D}$ stand for the following statement: for every (possibly infinite) instance $\mc{X}$ of the $\mc{D}$-homomorphism problem, if every finite substructure of $\mc{X}$ admits a homomorphism to $\mc{D}$, then so does $\mc{X}$. For example, if $\mc{D}=K_n$, the complete graph on $n$ vertices, then the statement $K_{K_n}$ is the De Bruijn-Erd\H os theorem.

    Clearly, $\cpct{D}$ follows from ZFC for all finite structures $\mc{D}$. However, if one drops the Axiom of Choice (AC), one can differentiate between $\cpct{D}$'s for various $\mc{D}$, expecting that $\cpct{D}$ is stronger when the $\mc{D}$-homomorphism problem is harder. We confirm that this is indeed the case, and it turns out that the right benchmark in this context is the Ultrafilter Lemma (UL), which states that every filter can be extended to an ultrafilter.
	
	\begin{theorem} (ZF)
		\label{t:main}
		\begin{enumerate}
			\item 
			\label{c:maineasy}
			If $(*)_\mc{D}$ holds, then $\cpct{D}$ is strictly weaker than UL. In fact, there is a model of ZF in which $\cpct{D}$ is true for exactly those $\mc{D}$'s for which $(*)_\mc{D}$ holds.
			\item 
			\label{c:mainhard}
			If $(*)_\mc{D}$ fails, then $\cpct{D} \iff UL$.
		\end{enumerate}
		
	\end{theorem}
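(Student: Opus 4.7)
The plan is to attack the two parts separately.

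For Part \ref{c:mainhard}, the easy direction UL $\Rightarrow \cpct{D}$ is obtained by coding an instance $\mc{X}$ of the $\mc{D}$-homomorphism problem as a propositional theory (one propositional variable $p_{v,d}$ for each $v\in\mc{X}$ and $d\in D$, with axioms saying each vertex gets a unique color and that the coloring respects all relations of $\mc{D}$). Finite satisfiability of this theory is exactly the hypothesis of $\cpct{D}$, and the compactness theorem for propositional logic, which follows from UL, produces a homomorphism. For the hard direction $\cpct{D} \Rightarrow$ UL under the failure of $(*)_\mc{D}$, the plan is to use the algebraic input (not the full CSP Dichotomy) that if $(*)_\mc{D}$ fails then $\mc{D}$ pp-constructs every finite structure; in particular, $\mc{D}$ pp-constructs the structure $\mathbf{B}$ on $\{0,1\}$ equipped with all relations. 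A syntactic translation carried out inside ZF then yields $\cpct{D} \Rightarrow \cpct{B}$: an instance of $\mathbf{B}$ gets rewritten as an instance of $\mc{D}$, locally preserving both ``every finite substructure has a homomorphism'' and ``the whole admits a homomorphism''. Since $\cpct{B}$ is precisely the compactness theorem for propositional logic, which is a classical equivalent of UL in ZF, the implication follows.

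For Part \ref{c:maineasy}, the goal is to construct a single model $M\models$ ZF in which UL fails but $\cpct{D}$ holds for every finite $\mc{D}$ satisfying $(*)_\mc{D}$; applying Part \ref{c:mainhard} inside $M$ then handles the other $\mc{D}$'s automatically, giving the ``exactly those $\mc{D}$'' statement. I would produce $M$ as a symmetric submodel of a ZFC-forcing extension (or, equivalently via Jech--Sochor, as the transfer of a Fraenkel--Mostowski permutation model), with generic atoms organized so that their automorphism group carries cyclic subgroups of every finite order, and with finite-support filters. The central lemma is a \emph{symmetrization} statement: given $n$ candidate homomorphisms $h_1,\dots,h_n$ of a finite substructure $\mc{F}$ of an instance $\mc{X}\in M$, related by a cyclic action of order $n$ arising from the model's symmetries, and a cyclic polymorphism $t\colon D^n \to D$, the coordinatewise application $t(h_1,\dots,h_n)$ is a new homomorphism of $\mc{F}$ invariant under the cyclic action---hence with strictly smaller support, and ultimately absorbed by $M$. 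Iterating this symmetrization together with the ambient ZFC compactness theorem and the finite-support machinery produces a global homomorphism $\mc{X}\to\mc{D}$ living in $M$. That UL fails in $M$ is standard for such constructions (no well-ordering of the generic atoms/reals is definable from a finite support).

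The main obstacle is the construction in Part \ref{c:maineasy}: the group of symmetries and the family of supports must be chosen so that (i) the symmetrization lemma applies uniformly to \emph{every} finite template $\mc{D}$ with $(*)_\mc{D}$ and \emph{every} arity of its cyclic polymorphism, (ii) the resulting homomorphisms genuinely belong to $M$ after only finitely many symmetrization steps, and (iii) UL still fails in $M$. The fact that cyclic polymorphisms are the correct algebraic notion matching a cyclic action on supports is precisely the ``deeper connection'' flagged in the introduction, and making this matching work uniformly in $\mc{D}$ is where the bulk of the technical difficulty should sit.
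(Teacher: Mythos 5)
Your overall plan matches the paper's: Part~(\ref{c:mainhard}) by pp-constructibility plus a ``syntactic translation'', Part~(\ref{c:maineasy}) by a symmetric model whose automorphism group carries cyclic groups and a symmetrization lemma via cyclic polymorphisms. That said, a few points in your sketch paper over genuine difficulty or are slightly off.

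For Part~(\ref{c:mainhard}): your target $\mathbf{B}$ on $\{0,1\}$ ``with all relations'' is not a structure in the paper's sense (only finitely many relations are allowed); the paper instead lands at $K_3$, for which $K_{K_3}\iff \mathrm{UL}$ is classical (L\"auchli). More importantly, your ``syntactic translation'' needs to do more than ``locally preserve'' the two sides of the compactness statement. The paper isolates this as a precise notion of \emph{finite reducibility} with a third condition controlling how finite substructures of the transformed instance pull back to finite substructures of the original; verifying this for pp-powers is where the real work sits (the equality symbol in pp-definitions forces a quotient of formal coordinates, and one must add extra elements to the finite substructure to make the needed identifications happen already at the finite level). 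That verification is a full appendix and is not automatic.

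For Part~(\ref{c:maineasy}): the remark that the forcing construction and a Fraenkel--Mostowski transfer via Jech--Sochor are ``equivalent'' is not safe here -- Jech--Sochor transfers only boundable statements, and $\cpct{D}$ quantifies over all instances $\mathcal{X}$ of arbitrary rank, so it is not bounded. The paper therefore runs the forcing argument directly. Also, the cyclic subgroups need to have \emph{prime} order $p>|D|$ (to match the arities guaranteed by $(*)_\mathcal{D}$), not ``every finite order'', and to serve all $\mathcal{D}$ at once the group must contain all primes; in the ZF (as opposed to ZFA) version each prime moreover has to occur infinitely often so that Cohen reals can be hung off each cycle. Finally, ``strictly smaller support, and ultimately absorbed by $M$'' is too quick: each application of the cyclic polymorphism makes the name invariant under one more generator, but one needs a compactness argument \emph{inside} $M[G]$ to stitch the countable sequence of partially-symmetrized names into a single name invariant under a whole filter subgroup (and then a further adjustment to get a genuinely symmetric name in $HS$). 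This is the content of the paper's Lemma~\ref{l:invarianceh} and the surrounding claim, and is not just iteration.
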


   While UL is strictly weaker than AC, one can still vaguely interpret Theorem~\ref{t:main} as $\cpct{D}$ requiring much more choice when the homomorphism problem is complicated. In Subsection \ref{ss:hard} we will illustrate why the Ultrafilter Lemma fits the world of CSP's.
   
	Theorem~\ref{t:main} generalizes results of L\'evy, Mycielski, and L\"auchli \cite{levy1963remarks,mycielski1961some, lauchli1971coloring, cowen1990two}, and verifies a conjecture posed in \cite[(7) p. 238]{cowen1990two}. In \cite{rorabaugh2017logical} Rorabaugh, Tardif, and Wehlau also study the strength of $K_{\mc{D}}$ without AC. Case of (\ref{c:mainhard}) extends Proposition 4.3 from their paper, while in (\ref{c:maineasy}) we settle Problem~4.4.
	
	The key idea of our proof is to work with so called cyclic polymorphisms, investigated by Barto and Kozik \cite{barto2012absorbing} instead of other, more commonly used equivalent forms of $(*)_\mathcal{D}$. Surprisingly, the need to use cyclic polymorphisms has naturally arisen from the purely infinitary machinery of permutation models, and this strongly suggests that there is a deeper connection between choiceless models and CSPs than one would expect. Let us also emphasize that the theorems below do not use the breakthrough results of Bulatov and Zhuk, hence the following question is rather natural.
	
	\begin{problem}
		Is it possible to adapt some ideas from the proof of Theorem \ref{t:main} to give a novel proof of the CSP Dichotomy Theorem?
	\end{problem}
	
	\noindent{\sc Roadmap.} In Section \ref{s:detailed} we provide a brief history of the CSP Dichotomy Theorem and some of its possible infinitary generalizations. Then we give an overview of the proof ideas of our result and formulate a black box theorem, which can be used without familiarity with forcing or abstract set theory. Then, in Section \ref{s:hard} we discuss the proof of the second part of the main theorem with details postponed to the Appendix, while in Section \ref{s:easy} we apply and prove the black box theorem and the first part of the main result. Finally, we conclude the paper by listing a couple of open problems.

	\section{Context and proof overview}
	\label{s:detailed}
	
	\subsection{The CSP Dichotomy Theorem}
	It was observed early on that in every known case the $\mc{D}$-homomorphism problem is either in $P$ or $NP$-complete. Schaefer \cite{schaefer1978complexity} proved this to be the case when $D$ has size at most two\footnote{We follow the convention of using calligraphic letters to denote structures and plain upper case letters to denote their universe.}. Later, Hell and Neset\v{r}il \cite{hell1990complexity} gave a complete characterization when $\mc{D}$ is a graph: they showed that if $\mc{D}$ is bipartite, then the problem is in $P$, and otherwise it is $NP$-complete. 
	
	Based on the mounting evidence, Feder and Vardi \cite{feder1998computational} formulated the statement which became known as the CSP Dichotomy Conjecture: 
	\[\text{The $\mc{D}$-homomorphism problem is either in $P$ or $NP$-complete}.\]
	Note that by a classical theorem of Ladner \cite{ladner1975structure}, if $P \neq NP$ then there are problems in $NP$ which are neither in $P$ nor $NP$-complete.
	
	Since the formulation of the conjecture, an immense amount of effort has been put into trying to verify it (some of the outstanding results are \cite{maroti2008existence,kun2012linear,barto2009constraint,barto2012absorbing,barto2009csp}, see \cite{brady2022notes} for a comprehensive introduction). Bulatov, Jeavons and Krokhin \cite{bulatov2005classifying} isolated an algebraic condition that seemed to characterize the easy problems. They proved that if it fails, then the $\mc{D}$-homomorphism problem is $NP$-complete and conjectured that conversely, if it holds, then the problem is in $P$. Finally, Bulatov \cite{bulatov2017dichotomy} and Zhuk \cite{zhuk2020proof} independently proved the conjecture, see Theorem~\ref{t:dichotomy} above.
	
	\subsection{Cyclic polymorphisms}\label{ss:cyclic_polymorphisms}
	The key algebraic tool for the investigation of CSPs turned out to be polymorphisms. An \emph{($n$-ary) polymorphism} is a homomorphism $\phi: \mc{D}^n \to \mc{D}$. Here $\mc{D}^n$ is the categorical power of the structure $\mc{D}$, i.e.\ if $R$ is a $k$-ary relation of $\mc{D}$, then it is interpreted on $\mc{D}^n$ as follows: let $\overline{x}_i=\big(\overline{x}_i(1), \ldots, \overline{x}_i(n)\big) \in D^n$ for $i=1, \ldots,k$, then  $(\overline{x}_1, \ldots, \overline{x}_k) \in R^{\mc{D}^n} \iff \big(\overline{x}_1(j), \ldots \overline{x}_k(j)\big) \in R^{\mc{D}}$ for all $1\leq j\leq n$.
	A useful way to look at, say, $n$-ary polymorphisms is that they combine $n$-many homomorphisms to $\mc{D}$ into a new one. 
	
	Clearly, projection maps are always polymorphisms. There are several results which show that the existence of essentially non-projective polymorphisms imply the existence of ones of special forms, the most common of them are being so called weak near-unanimity (WNU) operations -- the Dichotomy Theorem is typically stated using those (\cite{bulatov2017dichotomy} and \cite{zhuk2020proof}). In our considerations, however the following type will play a crucial role.
	
	\begin{definition}			
		A polymorphism $\phi: \mc{D}^n \to \mc{D}$ is called \emph{cyclic} if $n\geq 2$ and $\phi$ satisfies \[\phi(x_0,x_1,\dots,x_{n-1})=\phi(x_1,x_2,\dots,x_{n-1},x_0)\]
		for all $(x_0,\dots,x_{n-1}) \in D^n$. 
	\end{definition}
 
     The following result is due to Barto and Kozik:

    \begin{theorem}\label{t:cyclic_polymorphisms}
    \cite{barto2012absorbing} For a finite relational structure $\fD$, the following are equivalent:
	\begin{enumerate}
		\item $\mathcal{D}$ admits a WNU. 
		\item $\mathcal{D}$ admits a cyclic polymorphism.
		\item $\mathcal{D}$ admits a cyclic polymorphism of every prime arity $p$ with $p>|D|$.
	\end{enumerate}

    \end{theorem}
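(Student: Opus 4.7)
Of the three conditions, $(3) \Rightarrow (2)$ is immediate (take any prime $p > |D|$). For $(2) \Rightarrow (1)$, suppose $\phi \colon D^n \to D$ is a cyclic polymorphism. Iterating the cyclic identity gives
\[
\phi(y, x, \ldots, x) = \phi(x, y, x, \ldots, x) = \cdots = \phi(x, \ldots, x, y),
\]
so $\phi$ is a WNU by definition.

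The substantive direction is $(1) \Rightarrow (3)$. Fix a WNU polymorphism $w$ of $\fD$ and a prime $p > |D|$; the plan is to construct a $p$-ary cyclic polymorphism. First, I would pass to an idempotent WNU via the standard composition trick, so that we may work inside the idempotent reduct of the polymorphism clone of $\fD$. Next, set up the algebraic reformulation: the $p$-fold power $\fD^p$ (with coordinatewise polymorphism operations) carries the cyclic shift $\sigma$ as an automorphism, and a $p$-ary cyclic polymorphism of $\fD$ is precisely an element of the free $p$-generated algebra in the variety generated by $\fD$ fixed by the generator-permutation induced by $\sigma$. Equivalently, it suffices to show that for every tuple $a \in D^p$ the subalgebra of $\fD^p$ generated by $\{\sigma^i(a) : 0 \leq i < p\}$ meets the diagonal $\Delta = \{(d,\ldots,d) : d \in D\}$.

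The core of the proof, due to Barto and Kozik, is an absorption analysis of $\sigma$-invariant subalgebras of $\fD^p$. Using the WNU identities for $w$, one shows that every $\sigma$-invariant subalgebra $B \leq \fD^p$ either already meets $\Delta$ or admits a proper $\sigma$-invariant absorbing subalgebra; iterating and using finiteness of $\fD^p$ yields a minimal $\sigma$-invariant subalgebra, which one then shows must intersect $\Delta$. The primality of $p$ enters through the orbit structure: every nontrivial $\sigma$-orbit on $D^p$ has size exactly $p$, and the hypothesis $p > |D|$ is what prevents such orbits from being packed into a small absorbing subalgebra extracted from $w$. The main obstacle is precisely this absorption step --- namely, producing from the WNU $w$ alone a term witnessing $\sigma$-invariant absorption for any subalgebra missing the diagonal. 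This is the genuine technical heart of \cite{barto2012absorbing}, relying on absorption theory for finite algebras (including variants of the loop lemma) and on the careful arithmetic input that $p$ is prime and strictly exceeds $|D|$.
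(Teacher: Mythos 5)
The paper does not prove this theorem: it is stated purely as a cited result from Barto--Kozik \cite{barto2012absorbing}, and the rest of the paper only ever uses condition (3), via Definition~\ref{d:tractability}. So there is no in-paper argument to compare against. Your treatment of $(3)\Rightarrow(2)$ and $(2)\Rightarrow(1)$ is correct (the cyclic identity, iterated, does give the weak near-unanimity identities; one should also note, if WNU is required to be idempotent in the convention being used, that one has to pass to an idempotent polymorphism first, but this is routine). For $(1)\Rightarrow(3)$ you correctly identify the free-algebra/diagonal reformulation and that the substance is absorption theory (the Absorption Theorem and the Loop Lemma), and you correctly locate where the hypotheses that $p$ is prime and $p > |D|$ enter. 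However, the specific induction you describe --- ``every $\sigma$-invariant subalgebra of $\fD^p$ either meets $\Delta$ or has a proper $\sigma$-invariant absorbing subalgebra; take a minimal one'' --- is not quite the shape of the Barto--Kozik argument, which instead reduces to a smooth-digraph/loop problem for the binary relation $\{(b_0,b_1):b\in E\}$ and uses $p>|D|$ plus primality to force algebraic length $1$ by pigeonhole. In any case you explicitly leave the absorption step unproved and defer to \cite{barto2012absorbing}, so your proposal is, like the paper, ultimately a citation of the hard direction rather than a proof of it.
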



			
	\begin{definition}
		\label{d:tractability}
		Let $(*)_\mc{D}$ denote that condition (3) holds in Theorem~\ref{t:cyclic_polymorphisms}.
	\end{definition}
	
	\subsection{Infinite versions of the CSP Dichotomy}
	
	Let us briefly mention that there are several infinite versions of the CSP dichotomy problem. A flourishing direction is to allow $\mc{D}$ to be infinite, while still requiring $\mc{X}$ to be finite and ask about the computational complexity of the problem. In this case a rich structure theory emerges, with a myriad of questions yet to be answered (see, e.g., \cite{bodirsky2021complexity,bodirsky2006constraint,bodirsky2015schaefer,bodirsky2015topological, barto2020topology, mottet2022smooth, pinsker2022dilemmas}). 
	
	Another direction, which has been recently initiated by Thornton \cite{riley} is to keep $\mc{D}$ finite, and require the homomorphisms and the instance to be Borel. An advantage of this approach is that in the Borel context, it can be proved that say, $2$-coloring is easier than $3$-coloring, see \cite{toden}. It seems however, that solving systems of linear equations over finite fields is already hard in the Borel context \cite{grebikv}, thus, the split between hard and easy problems occurs at a different place. 
	
	The direction explored in this paper is to keep $\fD$ finite and investigate the strength of the statement $K_{\fD}$ for infinite $\fX$. We build on the work of L\'evy, Mycielski, and L\"auchli \cite{levy1963remarks,mycielski1961some, lauchli1971coloring}, who showed that over ZF, for $n \geq 3$ the statement $K_{K_n}$ is equivalent to UL and $K_{K_2}$ is significantly weaker (see also \cite{howard1998consequences,halbeisen2012combinatorial}).
	
	Our Theorem~\ref{t:main} shows that the same split between easy and hard problems occurs in the infinite context as in the CSP Dichotomy. That is, $K_\mc{D}$ is equivalent to UL if $(*)_\mc{D}$ does not hold, while otherwise it is a weaker statement. Note also that in our case this is a dichotomy in the usual sense, unlike the finite case, where if $P=NP$ then the two cases collapse into one. 
	
	In the rest of the paper, we work over ZF, unless specified otherwise.

	\subsection{Hard problems}
    \label{ss:hard}


    In this subsection, we first sketch a short proof of a classical theorem of Läuchli and Lévy, in order to illustrate why the Ultrafilter Lemma (UL) is natural to consider in the context of infinite CSPs (see also the $k$-switch problem of Greenwell-Lov\'asz \cite{greenwell1974applications}, Komj\'ath-Totik \cite{komjath2008ultrafilters} and Rorabaugh-Tardif-Wehlau \cite[Lemma 4.1]{rorabaugh2017logical}). 
 
    \begin{theorem}\label{t:K_K_3_UL}
        \cite{lauchli1971coloring}
        The compactness principle $K_{K_3}$ is equivalent to the Ultrafilter Lemma.
    \end{theorem}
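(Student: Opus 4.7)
The plan is to prove the two directions separately, with the reverse implication being the substantive one.

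For $UL \Rightarrow K_{K_3}$, I would invoke the standard compactness argument. Under $UL$, any product of finite discrete spaces is compact (in ZF this is equivalent to $UL$). Given a graph $G=(V,E)$ every finite subgraph of which is $3$-colorable, consider the space $3^V$ and, for each finite $F \subseteq V$, the set $C_F \subseteq 3^V$ of maps whose restriction to $F$ is a proper $3$-coloring of $G[F]$; each $C_F$ is clopen and nonempty, and $C_{F_1} \cap \cdots \cap C_{F_k} = C_{F_1 \cup \cdots \cup F_k}$, so the family $\{C_F\}_F$ has the finite intersection property. Compactness then produces a point in $\bigcap_F C_F$, i.e.\ a proper $3$-coloring of $G$.

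For $K_{K_3} \Rightarrow UL$, I would factor through propositional compactness (equivalently, the Boolean Prime Ideal Theorem), which is known to be equivalent to $UL$ over ZF. Given a set $\Phi$ of propositional formulas every finite subset of which is satisfiable, first rewrite $\Phi$ into an equisatisfiable set of $3$-CNF clauses via the Tseitin transformation, using only canonically indexed auxiliary atoms. Then implement the classical reduction of $3$-SAT to $3$-coloring: fix three distinguished vertices $T,F,G$ forming a triangle; for each atom $p$, adjoin $v_p, v_{\neg p}$ with edges $v_p v_{\neg p}$, $v_p G$, $v_{\neg p} G$ (so $v_p, v_{\neg p}$ take the colors of $T$ and $F$ in some order); and for each $3$-clause attach the classical $6$-vertex gadget whose proper $3$-colorings are exactly those in which at least one of its three literal-vertices is colored like $T$. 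Any finite subgraph $H$ of the resulting graph $\mathcal{G}_\Phi$ is contained in a finite union of such gadgets corresponding to some finite $\Psi \subseteq \Phi$; by hypothesis $\Psi$ is satisfiable, so this enlargement of $H$ admits a proper $3$-coloring, hence so does $H$. Applying $K_{K_3}$ then yields a global proper $3$-coloring of $\mathcal{G}_\Phi$, and the colors of the $v_p$'s read off a satisfying assignment for $\Phi$.

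The main obstacle is to verify that both constructions are fully canonical, so that no hidden choice is used in building $\mathcal{G}_\Phi$ from $\Phi$. This is routine: the Tseitin transformation and the Karp-style clause gadget are deterministic local procedures, and the vertex set of $\mathcal{G}_\Phi$ can be coded by tuples from $\Phi$ together with finite auxiliary indices; the correctness of each gadget is a finite combinatorial check. Thus the only nontrivial set-theoretic ingredient is the compactness principle $K_{K_3}$ itself.
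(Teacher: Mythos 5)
Your proof is correct but takes a genuinely different route from the paper's for the substantive direction. For $K_{K_3}\Rightarrow UL$, the paper constructs the ultrafilter \emph{directly}: given a filter $\fF$ on $X$, it forms the graph on $3^X$ with edges $\{(f,g): \{x: f(x)\neq g(x)\}\in\fF\}$, applies $K_{K_3}$ to get a coloring $c$, normalizes $c$ on the constant functions, and then declares $A\in\fU$ iff $c(\vmathbb{1}_A)=1$; verifying that $\fU$ is an ultrafilter extending $\fF$ is the combinatorial content. You instead factor through the equivalence of UL with propositional compactness and import the classical Karp-style $3$-SAT $\to$ $3$-coloring gadget (after a canonical Tseitin normalization). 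Both are sound, but they buy different things: the paper's construction is self-contained and, as the surrounding text emphasizes, is chosen deliberately because it exposes the polymorphism/Arrow's-theorem phenomenon that drives the rest of the paper — the same graph on $3^X$ with $X$ finite shows every polymorphism of $K_3$ is a projection. Your route is more modular and closer in spirit to complexity theory, but it relies on two external black boxes (BPI $\Leftrightarrow$ propositional compactness, and the $3$-SAT reduction gadget) and requires the extra bookkeeping about canonicity of the Tseitin atoms, which you correctly flag. One tiny slip in your forward direction: $C_{F_1}\cap\cdots\cap C_{F_k}$ need not \emph{equal} $C_{F_1\cup\cdots\cup F_k}$ (cross-edges between the $F_i$'s may be violated); you only have $C_{F_1\cup\cdots\cup F_k}\subseteq C_{F_1}\cap\cdots\cap C_{F_k}$, but that inclusion together with nonemptiness of $C_{F_1\cup\cdots\cup F_k}$ is all the finite intersection property needs, so the argument stands.
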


    \begin{proof}[Proof sketch.]
    It is easy to see that $UL$ implies the compactness theorem of first-order logic, which in turn implies $K_{K_3}$. 

    In order to see the converse, let $X$ be a set and $\fF\subseteq\fP(X)$ a filter on $X$. We will extend $\fF$ to an ultrafilter. Consider the following graph $G$ on $3^X$:
    $$(f,g)\in G\overset{\text{def}}{\iff}\{x\in X:\ f(x)\neq g(x)\}\in\fF.$$
	By $\mathcal{F}$ being a filter, it follows that every finite subgraph of $G$ admits a proper $3$-coloring. Hence, $K_{K_3}$ implies the existence of a coloring $c:3^X\to 3$. 
	
	The key idea is that from $c$ one can define an ultrafilter extending $\mathcal{F}$. Indeed, we can assume without loss of generality that $c$ maps the constant functions ($\vmathbb{0}_X$, $\vmathbb{1}_X$, and $\vmathbb{2}_X$) to their values $0$, $1$, and $2$ respectively. Then define
    $$\fU=\{A\subseteq X:\ c(\vmathbb{1}_A)=1\},$$
    where $\vmathbb{1}_A(x)=1$ if $x\in A$ and $0$ otherwise. Checking that $\fU$ is indeed an ultrafilter containing $\mathcal{F}$ is a nice, nontrivial exercise.  
\end{proof}
       


    In particular, if $X$ is finite and $\fF=\{X\}$, then $\fU$ being an ultrafilter means that $c$ only depends on one coordinate. In other words, every polymorphism of $K_3$ is essentially a projection.

    One can also regard $n$-ary polymorphisms as voting rules for an $n$-sized electorate: indeed, each individual voter chooses an element of the structure $H$ and the voting rule should determine a final outcome in $H$ based on the list of votes. For example, simple majority vote corresponds to a $K_2^3 \to K_2$ polymorphism. Keeping this in mind, polymorphisms of $K_3$ being essentially projections is a version of Arrow's paradox, namely, the fact that voting systems for selecting one of three candidates satisfying some natural requirements are necessarily dictatorships, i.e., the outcome depends only on a single voter. 


	Now, let us discuss our proof of the second, less surprising statement of Theorem \ref{t:main}. We use a result of Barto-Opr\v{s}al-Pinsker \cite{barto2018wonderland} (which builds on the previous work of Bulatov-Jeavons-Krokhin \cite{bulatov2005classifying}) that allows us to
      define a constructive way of reduction between homomorphism problems of structures for which $(*)_{\mc{D}}$ fails. This reduction turns out to be essentially sufficient to our purposes as well, yielding the following:
	
	\begin{theorem}
		\label{c:hardness} If $\lnot (*)_\mc{D}$, then $K_\mc{D} \iff K_{K_3} \iff  UL$.
	\end{theorem}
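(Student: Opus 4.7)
The plan is to close the cycle of implications $UL \Rightarrow K_{\mc{D}} \Rightarrow K_{K_3} \Rightarrow UL$. The last implication is precisely Theorem~\ref{t:K_K_3_UL}, and the first is a direct consequence of propositional compactness: given a $\mc{D}$-instance $\mc{X}$ whose finite substructures admit homomorphisms to $\mc{D}$, consider the propositional theory with variables $p_{x,d}$ (for $x\in X$, $d\in D$) expressing that each $x$ is assigned a unique $d\in D$ and that the assignment respects every relation of $\mc{X}$. The hypothesis makes this theory finitely satisfiable; compactness of propositional logic (well known to be equivalent to UL over ZF) then produces a homomorphism $\mc{X}\to\mc{D}$.

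The heart of the argument is the middle implication $K_{\mc{D}}\Rightarrow K_{K_3}$ under the hypothesis $\lnot(*)_{\mc{D}}$. For this I would invoke the reduction machinery of Barto--Opr\v sal--Pinsker: failure of $(*)_{\mc{D}}$ is equivalent to the statement that $\mc{D}$ admits a primitive positive construction of every finite structure, and in particular of $K_3$. Unpacking, this supplies an exponent $m\in\N$, a primitive positive formula $\sigma$ in the language of $\mc{D}$, and a pp-definable equivalence relation $\sim$ on $D^m$ such that (a quotient image of) $(D^m,\sigma)/\!\!\sim$ admits $K_3$ as a homomorphic image. This finitary data can then be used constructively to turn any graph $\mc{X}$ into a $\mc{D}$-instance $\mc{Y}$ whose universe is built canonically from $X$ (roughly, $X^m$ together with the constraints dictated by $\sigma$ and $\sim$), enjoying two key properties: (i) homomorphisms $\mc{Y}\to\mc{D}$ can be canonically converted into homomorphisms $\mc{X}\to K_3$, and conversely; (ii) the correspondence is \emph{local}, in that every finite substructure $\mc{Y}_0\subseteq\mc{Y}$ is supported by only finitely many vertices of $\mc{X}$.

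Granted such a reduction, the implication is immediate. Let $\mc{X}$ be a graph whose finite subgraphs are all $3$-colorable, and construct $\mc{Y}$ as above. Any finite substructure $\mc{Y}_0\subseteq\mc{Y}$ lies over a finite $\mc{X}_0\subseteq\mc{X}$ by (ii); a $3$-coloring of $\mc{X}_0$, which exists by hypothesis, translates by (i) to a homomorphism $\mc{Y}_0\to\mc{D}$. Thus $\mc{Y}$ satisfies the finite-compactness hypothesis of $K_{\mc{D}}$, and an application of $K_{\mc{D}}$ yields $h\colon\mc{Y}\to\mc{D}$, from which we read off the desired $3$-coloring of $\mc{X}$.

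The principal obstacle I foresee is bookkeeping of a subtle sort: the reduction and its inverse must be carried out without any hidden appeal to choice. In particular, no step may require picking a representative from a $\sim$-class, and the back-translation of homomorphisms must be uniformly definable from $h$. I would expect to circumvent these difficulties by working directly on $X^m$ rather than on a quotient, encoding $\sim$-identifications as explicit constraints in $\mc{Y}$, so that the whole reduction is a single first-order construction absolute in ZF. Verifying that the Barto--Opr\v sal--Pinsker framework admits such a choice-free rendering is, I suspect, the reason the full details are deferred to the Appendix.
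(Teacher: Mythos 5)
Your proposal is correct and reaches the theorem by essentially the same key idea as the paper (the Barto--Opr\v sal--Pinsker pp-construction yields a choice-free reduction that preserves finite solvability), but the logical skeleton differs slightly and is worth comparing. The paper proves the biconditional $K_\fD \iff K_{K_3}$ by showing the two structures \emph{mutually} finitely reduce to each other (using that both $(*)_\fD$ and $(*)_{K_3}$ fail), then citing Theorem~\ref{t:K_K_3_UL}. You instead close a cycle $UL \Rightarrow K_\fD \Rightarrow K_{K_3} \Rightarrow UL$, which is a bit more economical: the implication $UL \Rightarrow K_\fD$ comes for free from propositional compactness, so you need the pp-construction reduction in \emph{one} direction only ($K_3$ finitely reduces to $\fD$), and you do not need to invoke $\lnot(*)_{K_3}$ at all. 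Both are sound; the paper's version packages the reduction as the self-contained notion of ``finite reducibility'' (Definition~\ref{d:finitely_reduces}), which it can then reuse elsewhere (e.g.\ Proposition~\ref{p:fails}).

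Two small cautions. First, your sketch of the reduction is imprecise: in the paper's Definition~\ref{d:pp_power}, a pp-power has universe $D^n$ with relations pp-defined using $\Sigma_\fD \cup \{=\}$, and the correspondence is completed by homomorphic equivalence; the universe of $\Gamma(\fX)$ is not ``$X^m$'' but rather $n$ formal coordinates per element of $X$ together with formal witnesses for each related tuple, quotiented by the equivalence generated by the equality atoms. Second, your proposed way of avoiding the quotient --- ``encoding $\sim$-identifications as explicit constraints in $\mc{Y}$'' --- does not obviously work, since $\fD$ need not have the diagonal as a relation and so equality constraints are not expressible as $\fD$-constraints; the paper instead does take the quotient, but arranges the construction (reading formal coordinates off the witnesses, or noting that $\wtilde\varphi$ is constant on $\sim$-classes so one can take the union of the singleton image) so that no representatives need to be chosen. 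You correctly diagnosed that choice-freeness is the crux, but the actual circumvention is via a carefully arranged quotient rather than by avoiding it.
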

	
	On the course of proving this statement, one can isolate a new notion of reducibility, which is interesting on its own.
	
	\begin{definition}
		\label{d:finitely_reduces} Let $\fD$ and $\fE$ be finite structures. We say that the \emph{$\fE$-homomorphism problem finitely reduces to the $\fD$-homomorphism problem}, or in short, \textit{$\mc{E}$ finitely reduces to $\mc{D}$}, if there exist operations $\Gamma$, $\Phi$ and $\Psi$ such that: 
		
		(1) if $\fX$ is an instance of $\fE$, then $\Gamma(\fX)$ is an instance of $\fD$;
	
		(2) for every instance $\fX$ of $\fE$ the operation $\Phi$ maps $\fX\to\fE$ homomorphisms to $\Gamma(\fX)\to\fD$ homomorphisms and $\Psi$ maps $\Gamma(\fX)\to\fD$ homomorphisms to $\fX\to\fE$ homomorphisms;

        (3) if there exists a finite substructure $\fH$ of $\Gamma(\fX)$ that does not admit a homomorphism to $\fD$, then there exists a finite substructure $\fF$ of $\fX$ that does not admit a homomorphism to $\fE$.
	\end{definition}

    \begin{remark}
		\label{r:fin_red_transitive}
		Observe that finite reducibility is a transitive relation. 
	\end{remark}

 \begin{remark}
        \label{r:suff_for_fin_red}
        Assuming that (1) and (2) of Definition~\ref{d:finitely_reduces} are satisfied, to check (3) it suffices to verify the following. For every finite substructure $\fH$ of $\Gamma(\mc{X})$ there exists a finite substructure $\fF$ of $\fX$ such that there exists an $\fH\to \Gamma(\fF)$ homomorphism.
        \end{remark}

        \begin{proof}
        Assume that a finite substructure $\fH$ of $\Gamma(\fX)$ does not admit a homomorphism to $\fD$. Since there is a finite substructure $\fF$ of $\fX$ and an $\fH\to\Gamma(\fF)$ homomorphism, $\Gamma(\fF)$ cannot admit a homomorphism to $\fD$. Then, by (2) of Definition~\ref{d:finitely_reduces}, $\fF$ cannot admit a homomorphism to $\fE$.
        \end{proof}
	
	The definition of finite reducibility is tailored to suit the next statement. 
	
	\begin{proposition} 
		\label{pr:finitary_red} For finite structures $\fD$ and $\fE$, if the $\fE$-homomorphism problem finitely reduces to the $\fD$-homomorphism problem, then $K_\fD \implies K_\fE$.
	\end{proposition}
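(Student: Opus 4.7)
The plan is to unpack the definition of finite reducibility and observe that its three conditions are precisely what is needed to transport compactness from $\fD$ to $\fE$. Let $\Gamma$, $\Phi$, $\Psi$ witness that $\fE$ finitely reduces to $\fD$, assume $K_\fD$, and let $\fX$ be an instance of $\fE$ such that every finite substructure of $\fX$ admits a homomorphism to $\fE$. The goal is to produce a homomorphism $\fX \to \fE$.

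First I would apply $\Gamma$ to obtain the instance $\Gamma(\fX)$ of $\fD$ supplied by clause (1). The key intermediate claim is that every finite substructure of $\Gamma(\fX)$ admits a homomorphism to $\fD$. I would prove this by contraposition: if some finite substructure $\fH$ of $\Gamma(\fX)$ admitted no homomorphism to $\fD$, then clause (3) would yield a finite substructure $\fF$ of $\fX$ admitting no homomorphism to $\fE$, contradicting our standing assumption on $\fX$. Once this claim is established, $K_\fD$ applies to $\Gamma(\fX)$ and produces a homomorphism $h : \Gamma(\fX) \to \fD$. Finally, clause (2) ensures that $\Psi$ sends $h$ to a homomorphism $\Psi(h) : \fX \to \fE$, completing the argument.

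Since the work of establishing a finite reduction has been encapsulated in Definition~\ref{d:finitely_reduces}, there is no real obstacle here: the proof is a direct three-line chase through the definition. If anything, the one place that requires mild attention is the contrapositive use of clause (3) in the intermediate claim, which is precisely the reason (3) is formulated in the asymmetric ``witness'' form rather than as a biconditional between the existence of homomorphisms from $\fX$ to $\fE$ and from $\Gamma(\fX)$ to $\fD$.
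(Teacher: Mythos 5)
Your proof is correct and follows essentially the same three-step chase through Definition~\ref{d:finitely_reduces} as the paper: apply $\Gamma$, use clause (3) contrapositively to confirm that every finite substructure of $\Gamma(\fX)$ maps to $\fD$, invoke $K_\fD$, and push the resulting homomorphism back through $\Psi$ via clause (2).
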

 
    \begin{proof}
		Assume that $\fE$ finitely reduces to $\fD$ and $K_\fD$ holds. Let $\fX$ be an instance of $\fE$, and suppose that every finite substructure $\fF$ of $\fX$ admits a homomorphism to $\fE$. Consider $\Gamma(\fX)$ (given by Definition~\ref{d:finitely_reduces}), which is an instance of $\fD$.

            By (3) of Definition~\ref{d:finitely_reduces}, every finite substructure of $\Gamma(\fX)$ admits a homomorphism to $\fD$.
            
		
		
		Thus, by $K_\fD$, there exists a $\Gamma(\mc{X})\to\fD$ homomorphism. Now (2) of Definition~\ref{d:finitely_reduces} provides an $\fX\to\fE$ homomorphism.
	\end{proof}

	Now, let us recall the notion of constructive reduction formulated in \cite{barto2018wonderland}.
 
        \textbf{Notation.} Let $\Sigma_\fD$ denote the signature of the structure $\fD$, i.e., the set of relations of $\fD$.

        \begin{definition}
        \label{d:pp_power}
		For finite structures $\fD$ and $\fE$ we say that \emph{$\fE$ is a $pp$-power of $\fD$} if for some $n\in\om$ we have $E=D^n$ and for every $k\in\om$ and relation symbol $R$ of arity $k$ in $\Sigma_\fE$ there exist $m_R\in\om$ and relation symbols $\alpha_{R,1},\ldots,\alpha_{R,m_R}$ in $\Sigma_\fD\cup\{=\}$ such that
        \begin{equation}\label{e:simp_def_witness_2}
        (\ovl z_1,\ldots,\ovl z_k)\in R^\fE\iff \exists \ovl w\ \bigwedge_{i=1}^{m_R} {\alpha_{R,i}}^\fD(\ovl z_1,\ldots,\ovl z_k,\ovl w).  
        \end{equation}
		
        \end{definition}

        \begin{definition}
        Two structures $\fD$ and $\fE$ are \emph{homomorphically equivalent} if there exist $\fD\to\fE$ and $\fE\to\fD$ homomorphisms.
        \end{definition}
	
	Now we can state our result.
 
	\begin{theorem}
		\label{t:fin_const_fin_red}
		For finite structures $\fD$ and $\fE$ we have the following:
  
            (A) If $\fD$ and $\fE$ are homomorphically equivalent, then they finitely reduce to each other.
            
            (B) If $\fE$ is a $pp$-power of $\fD$, then $\fE$ finitely reduces to $\fD$.
	\end{theorem}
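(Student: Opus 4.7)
The plan for part (A) is straightforward: since $\fD$ and $\fE$ share a signature (required for $\fD\to\fE$ and $\fE\to\fD$ homomorphisms to make sense), I take $\Gamma$ to be the identity on instances, and use the homomorphisms $h\from\fD\to\fE$ and $k\from\fE\to\fD$ witnessing homomorphic equivalence to push and pull, setting $\Phi(f):=k\circ f$ and $\Psi(g):=h\circ g$. Condition (2) of Definition~\ref{d:finitely_reduces} holds by composition, and condition (3) follows instantly from Remark~\ref{r:suff_for_fin_red} with $\fF:=\fH$ (any finite substructure of $\Gamma(\fX)=\fX$ embeds into $\Gamma$ of the same substructure of $\fX$). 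The reverse reduction is identical by symmetry.

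Part (B) is the substantive one; the guiding idea is to ``unfold'' the $pp$-definitions of the relations of $\fE$ inside $\fD$. Concretely, given an instance $\fX$ of $\fE$, I plan to let $\Gamma(\fX)$ have as universe the disjoint union of $n$ coordinate copies of $X$ (call them $X\times[n]$) together with fresh \emph{witness vertices} $w_s^{R,\ovl z}$, one for each relation symbol $R\in\Sigma_\fE$, each tuple $\ovl z\in R^\fX$, and each position $s$ in the witness-variable tuple $\ovl w$ of the $pp$-definition of $R$ given by \eqref{e:simp_def_witness_2}. For each such $R$, $\ovl z$ and each conjunct $\alpha_{R,i}$, I add to the appropriate relation symbol of $\Sigma_\fD$ in $\Gamma(\fX)$ the tuple obtained by substituting coordinate- and witness-vertices as dictated by $\alpha_{R,i}$; equality atoms are absorbed at the outset by identifying the named vertices.

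The homomorphism-transfer operations are then the natural ones. Given $f\from\fX\to\fE$, write $f(x)=(f(x)_1,\dots,f(x)_n)\in D^n$ and set $\Phi(f)(x,t):=f(x)_t$ on coordinate-vertices; on each witness cluster the tuple $(f(x_1),\dots,f(x_k))$ lies in $R^\fE$, so by the $pp$-definition some witness tuple $\ovl w$ over $D$ verifies the conjunction, and I let $\Phi(f)(w^{R,\ovl z}_s)$ be the $s$-th entry of the lexicographically smallest such $\ovl w$ with respect to a fixed linear order on $D$; since $D$ is finite this canonical choice is available in ZF. Conversely, $\Psi(g)(x):=(g(x,1),\dots,g(x,n))\in D^n=E$, and the $pp$-clauses hold at $\bigl(\Psi(g)(x_1),\dots,\Psi(g)(x_k)\bigr)$ because $g$ being a $\Gamma(\fX)\to\fD$ homomorphism supplies the required witnesses $g(w^{R,\ovl z}_s)$. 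For condition (3) I invoke Remark~\ref{r:suff_for_fin_red}: given a finite $\fH\subseteq\Gamma(\fX)$, let $F\subseteq X$ be the finite set of all $x$ that either index a coordinate-vertex of $\fH$ or appear in some tuple $\ovl z$ indexing a witness-vertex of $\fH$; letting $\fF$ be the substructure of $\fX$ induced on $F$, the inclusion gives the required $\fH\to\Gamma(\fF)$ homomorphism.

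The main obstacle I anticipate is the technical bookkeeping around equality atoms in \eqref{e:simp_def_witness_2}, which must be handled coherently at the quotient step to keep $\Gamma$ well-defined as an operation on instances; a secondary concern is ensuring that the witness choice in $\Phi$ is made uniformly across potentially infinitely many tuples $\ovl z\in R^\fX$, which is exactly why fixing a single linear order on the finite set $D$ at the outset is essential.
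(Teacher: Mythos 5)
Your treatment of part (A) is correct and identical to the paper's. Your high-level plan for part (B)---unfolding the $pp$-definitions via $n$ coordinate copies plus witness vertices, then quotienting to absorb equalities---is also the paper's approach. But there is a genuine gap in your verification of condition (3) of Definition~\ref{d:finitely_reduces}, and you have not quite isolated where the real difficulty with the equality atoms lies.

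First, a small normalization you omit: one must arrange (by eliminating variables) that no equality atom in any $pp$-definition mentions a witness variable. Otherwise your scheme, which only identifies coordinate vertices, is not even well-posed. With that in place, the vertices of $\Gamma(\fX)$ in the coordinate part are equivalence classes under the relation generated by the equality atoms ranging over all tuples of $R^\fX$. You correctly flag that $\Phi(f)$ must be shown to be constant on these classes---this is the paper's Claim~1, proved by following a path of equalities---but you treat it as ``bookkeeping'' rather than proving it.

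The more serious gap is in condition (3). You propose taking $F$ to be the set of $x\in X$ indexing coordinate vertices of $\fH$ or appearing in witness indices, and assert that ``the inclusion gives the required $\fH\to\Gamma(\fF)$ homomorphism.'' This is false as stated: there is no inclusion $\fH\hookrightarrow\Gamma(\fF)$, because both $\Gamma(\fX)$ and $\Gamma(\fF)$ are quotients, and the quotienting is \emph{not} compatible with restriction. Concretely, a class $[\ovl x(j)]_{\fX}$ of $\Gamma(\fX)$ appearing in $\fH$ may also contain some $\ovl y(l)$ with $y\in F$, and a relation of $\fH$ may have been placed on that class because of the tuple containing $\ovl y(l)$, not $\ovl x(j)$. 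In $\Gamma(\fF)$ the two need not be identified, since the path of equality edges linking $\ovl x(j)$ to $\ovl y(l)$ in $G_\fX$ may pass through elements outside $F$. The natural candidate map $[\ovl x(j)]_\fX\mapsto[\ovl x(j)]_\fF$ can therefore fail to be a homomorphism (or even well-defined). The paper fixes this by explicitly enlarging $F$ in two passes: first adding finitely many elements of $\wtilde X$ so that any two $\sim_\fX$-equivalent members of the preliminary set are joined by a path inside it, then adding the $X$-elements that realize each edge of that path in $G_\fF$. Without some such enlargement, your verification of (3) does not go through, and since (3) is precisely the finitary condition that drives Proposition~\ref{pr:finitary_red}, this is not a cosmetic omission.

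Your witness-selection device (lexicographically smallest witness with respect to a fixed order on $D$) is fine and equivalent to the paper's observation that the witness depends only on the finitely many values $(\varphi(x_1),\dots,\varphi(x_k))$, so no choice is needed.
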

 
    \begin{proof}
        See Section~\ref{s:hard} and the Appendix.
    \end{proof}

    To exploit this result, we use the following theorem which follows from \cite[Theorem~1.3]{barto2018wonderland}.
    

    \begin{theorem}
    \label{t:intract_constructs_all} 
    Assume that $\mc{D},\mc{E}$ are finite structures so that $\lnot (*)_\mc{D}$. Then $\mc{E}$ is homomorphically equivalent to a pp-power of $\mc{D}$.
    \end{theorem}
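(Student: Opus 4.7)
The plan is to deduce this statement directly from the pp-constructability framework of Barto–Opršal–Pinsker rather than reprove anything from scratch. The dictionary that makes this work translates a purely structural/relational question — is $\mc{E}$ homomorphically equivalent to a pp-power of $\mc{D}$? — into a purely algebraic question about the polymorphism clones $\mathrm{Pol}(\mc{D})$ and $\mathrm{Pol}(\mc{E})$. Specifically, Theorem~1.3 of \cite{barto2018wonderland} asserts that $\mc{E}$ is pp-constructible from $\mc{D}$ (that is, homomorphically equivalent to a pp-power of $\mc{D}$) if and only if there exists a minor-preserving map (equivalently, a height-$1$ clone homomorphism) from $\mathrm{Pol}(\mc{D})$ to $\mathrm{Pol}(\mc{E})$. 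So the whole theorem will be reduced to producing such a minor-preserving map under the hypothesis $\lnot (*)_{\mc{D}}$.

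First I would record the algebraic translation of the hypothesis: by Theorem~\ref{t:cyclic_polymorphisms}, $\lnot (*)_{\mc{D}}$ is equivalent to $\mc{D}$ not admitting a WNU polymorphism, and by the Siggers/Taylor characterization (which is the content of the Bulatov–Jeavons–Krokhin direction of the dichotomy and a theorem of Barto–Kozik), this is in turn equivalent to the existence of a minor-preserving map
\[
    \xi \from \mathrm{Pol}(\mc{D}) \to \mathrm{Proj},
\]
where $\mathrm{Proj}$ denotes the clone of projections on a two-element set. This is the only place where the assumption $\lnot (*)_{\mc{D}}$ enters.

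Next I would complete the argument by noting that $\mathrm{Proj}$ has a minor-preserving map into any polymorphism clone, in particular into $\mathrm{Pol}(\mc{E})$: simply send the $i$-th $n$-ary projection on $\{0,1\}$ to the $i$-th $n$-ary projection on $E$; preservation of minors is immediate, since taking a minor of a projection is again a projection with a known index. Composing this with $\xi$ yields a minor-preserving map $\mathrm{Pol}(\mc{D}) \to \mathrm{Pol}(\mc{E})$, and the Barto–Opršal–Pinsker correspondence then delivers exactly the conclusion that $\mc{E}$ is homomorphically equivalent to a pp-power of $\mc{D}$.

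The only nontrivial ingredient — the step I expect to be the main obstacle if one tried to be self-contained — is the first bullet: packaging the hypothesis $\lnot(*)_{\mc{D}}$ as the existence of the clone homomorphism $\xi$. This is where one genuinely relies on nontrivial universal-algebraic results (Taylor terms/Siggers/Barto–Kozik), whereas the Wonderland correspondence itself is a formal, if intricate, duality. Everything else is essentially bookkeeping: the composition of two minor-preserving maps is minor-preserving, and projections map to projections.
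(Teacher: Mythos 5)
Your proposal is correct and takes the same route as the paper, which simply cites Theorem~1.3 of \cite{barto2018wonderland} without spelling out the derivation; you correctly unfold the intended chain, namely that $\lnot(*)_\mc{D}$ yields a minor-preserving map $\mathrm{Pol}(\mc{D}) \to \mathrm{Proj}$, which composed with the trivial map $\mathrm{Proj} \to \mathrm{Pol}(\mc{E})$ gives a minor-preserving map $\mathrm{Pol}(\mc{D}) \to \mathrm{Pol}(\mc{E})$ and hence pp-constructibility. The only shortcut worth flagging is that you treat ``pp-constructible from $\mc{D}$'' and ``homomorphically equivalent to a pp-power of $\mc{D}$'' as definitionally the same; they are indeed equivalent, but the collapse of an arbitrary chain of pp-powers and homomorphic equivalences to a single pp-power followed by one homomorphic equivalence is a separate (if routine) lemma in \cite{barto2018wonderland}, not part of the definition.
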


	Putting all these ingredients together we can prove Theorem \ref{c:hardness}.

	\begin{proof}[Proof of Theorem \ref{c:hardness}]
	   To see the first equivalence, by Proposition \ref{pr:finitary_red}, it suffices to prove that $K_\fD$ and $K_{K_3}$ finitely reduce to each other. By Theorems~\ref{t:fin_const_fin_red} and \ref{t:intract_constructs_all}, this follows from the fact that both $(*)_\mc{D}$ and $(*)_{K_3}$ fail. The second equivalence follows from Theorem \ref{t:K_K_3_UL}.
	\end{proof}

	\subsection{Easy problems} Second, let us give a high level overview of the main ideas of the proof of \eqref{c:maineasy} of Theorem \ref{t:main}. Call 
	an instance $\mc{X}$ of $\mc{D}$ \emph{finitely solvable} if every finite substructure of $\fX$ admits a homomorphism to $\mc{D}$. 
	
	In order to construct models in which, say $K_{K_3}$ fails, we use classical ideas of Mostowski and Fraenkel. They constructed so-called \emph{permutation models}, that is, models in which all of the axioms of ZF hold except for the Axiom of Extensionality (a.k.a. Equality): in addition to the $\emptyset$, there is a collection of \emph{atoms} which do not have elements. The corresponding axiom system is denoted by ZFA. While these models are not models of ZF, forcing arguments using similar ideas often -- this happens in our case as well -- yield ZF models with analogous properties (see, e.g., \cite[Chapter 15]{jech2003set}). 

    Let us describe how to construct a ZFA model. Build the cumulative hierarchy of sets, but this time starting not just from the $\emptyset$, but $
    \emptyset$ together with the collection of atoms, $A$. Fix some subgroup $\Gamma$ of the permutation group of atoms. Note that the action on atoms naturally extends to an action on the sets containing atoms, sets of sets of atoms, etc. (define  $\gamma \cdot x$ to be $\{\gamma \cdot y:y \in x\}$). Now consider only the collection, denoted by $HS$, of elements of the hierarchy which are hereditarily invariant (or hereditarily symmetric) under some further ``large" subgroups $\Gamma'<\Gamma$. In such a way we will ensure that the atoms are somewhat indistinguishable from each other within the model. It turns out that $HS$ is a model of ZFA. By choosing $A$ and $\Gamma$ appropriately, we will be able to control the truth value of the statements $K_{\mc{D}}$ in $HS$.

 	Now, for a fixed structure $\mc{D}$ with $(*)_\mc{D}$ we take a prime $p$ and a cyclic polymorphism $\phi$ of arity $p$ (see \ref{d:tractability}). We define a graph on the atoms, consisting of disjoint cycles of size $p$, and choose the group to be the one generated by rotations of single cycles. It will easily follow that such a graph cannot have a $3$-coloring in the model (i.e., an invariant one), but every finite subgraph has one, that is, $\lnot \cpcth$.
	
	In order to show that $\cpct{D}$ holds in this model, we have to prove that given a finitely solvable instance $\fX$ of the $\mc{D}$-homomorphism problem, it has a solution (i.e.,\ an $\fX \to \fD$ homomorphism) in the model. By the definition of the model, $\fX$ is invariant under some subgroup $\Gamma'$, and it suffices to construct a $\Gamma'$-invariant solution. The main observation is that this can be done starting from an arbitrary solution, outside the model, which exists by finite solvability and compactness (using AC). We can then use compactness (AC) again to find an invariant solution inside the model. Since we have compactness, it suffices to construct solutions that are invariant under any finite subset of $\Gamma'$. This can be done using cyclic polymorphisms: if $h_0$ is a homomorphism, $\alpha$ is a group element with $\alpha^p=1$, and $\phi$ is a cyclic polymorphism of $\fD$ of arity $p$, then $h=\phi(h_0,\alpha \cdot h_0, \dots, \alpha^{p-1} \cdot h_0)$ is a solution invariant under $\alpha$.
	
	To make this intuition precise, we proceed to describe a general theorem, which can be applied without familiarity with forcing or abstract set theory. Let $\Gamma$ be a group. A collection $\mc{F}$ of subgroups of $\Gamma$ is called a \emph{filter} if 
	\begin{itemize}
		\item $\{1\} \not \in \mathcal{F}$, $\Gamma \in \mathcal{F}$
		\item $\Delta, \Delta' \in \mathcal{F}$ implies $\Delta \cap \Delta' \in \mathcal{F}$,
		\item $\Delta \in \mathcal{F}$, $\Delta <\Delta'$ implies $\Delta' \in \mathcal{F}$, 
		\item $\Delta \in \mathcal{F}$, $\gamma \in \Gamma$ implies $\gamma^{-1} \Delta \gamma \in \mathcal{F}$. 
	\end{itemize}
	
	The main example of a filter to be kept in mind for a group acting on an infinite set $T$ is the collection of subgroups that contain the \emph{pointwise stabilizer} $\stab_{pw}(F)$ of some finite set $F \subset T$.
	
	If $\cdot: \Gamma \times X \to X$ is an action of $\Gamma$ on a set $X$ and $Y_i \subset X$ for $i \in I$, then the setwise \emph{stabilizer of $(Y_i)_{i \in I}$} is the subgroup \[\stab((Y_i)_{i \in I})=\{\gamma \in \Gamma: \forall i \in I \ \gamma \cdot Y_i=Y_i\}.\]

	The action $\cdot$ extends to $X^n$, $\bigcup_n X^n$ coordinate-wise. Similarly, if $D$ is any set on which $\Gamma$ does not act (or equivalently, assumed to act trivially), we can extend the action to $X \times D$. We also use $\cdot$ to denote these actions. Note that this allows us to talk about the stabilizers of structures on $X$ (as they are collections of subsets of $\bigcup_n X^n$) and functions $X \to D$, as they are subsets of $X \times D$. For $S \subset \Gamma$ and an action of $\Gamma$ on a set $Z$, a set $Y \subset Z$ is called \emph{$S$-invariant} if $S \subset \stab(Y)$.
	
	\begin{definition}
		Assume that $\Gamma$ acts on $X$, the universe of some structure $\mc{X}$. We say that $\mc{X}$ is \emph{$\mathcal{F}$-symmetric (w.r.t. the given action of $\Gamma$)} if $\stab(\mc{X}) \in \mathcal{F}$.   
	\end{definition}
	
	In Section~\ref{s:easy}, we prove the following theorem, which one can use as a black box, without familiarity with abstract set theory.
	
	\begin{theorem}
		\label{t:forcingblackbox}
		Let $\Gamma$ be a group and $\mathcal{F}$ be a filter of subgroups of $\Gamma$. Assume that for some structures $\mc{D},\mc{E}$ the below statements can be proved in ZFC:
		\begin{enumerate}
			\item For every finitely solvable $\mathcal{F}$-symmetric  $\mc{D}$-instance $\mathcal{X}$ there exists a $\mathcal{F}$-symmetric homomorphism from $\mathcal{X}$ to $\mc{D}$. 
			
			\label{c:forevery} 
			\item \label{c:example} There exists a finitely solvable $\mathcal{F}$-symmetric $\mc{E}$-instance $\mathcal{Y}$ with $\stab(F) \in \mathcal{F}$ for all finite $F \subset Y$, which does not admit an $\mathcal{F}$-symmetric homomorphism to $\mc{E}$.
		\end{enumerate}
		Then there exists a model of ZFA in which $\cpct{D}$ holds but $\cpct{E}$ fails.
	\end{theorem}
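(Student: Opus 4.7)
The plan is to build a Fraenkel--Mostowski permutation model $M$ from the data $(\Gamma,\mathcal{F})$ and to check that $K_\mathcal{D}$ holds while $K_\mathcal{E}$ fails inside it. First, I would fix a set $A$ of atoms carrying a $\Gamma$-action, rich enough that the universe $Y$ of the witness $\mathcal{Y}$ from hypothesis (\ref{c:example}) embeds $\Gamma$-equivariantly into $A$ (for instance, take $A=Y$, replacing each element of $Y$ by a fresh atom, and adjoin further $\Gamma$-orbits if one wishes $\Gamma$ to act faithfully). Build the cumulative hierarchy $V[A]$ over these atoms and extend the $\Gamma$-action by $\gamma\cdot x=\{\gamma\cdot y:y\in x\}$. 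Let $M$ consist of the \emph{hereditarily $\mathcal{F}$-symmetric} sets, i.e., those $x$ with $\stab(x)\in\mathcal{F}$ all of whose elements are themselves hereditarily $\mathcal{F}$-symmetric. The filter axioms (finite intersection, upward closure, conjugacy invariance) are precisely what is needed for $M$ to be a transitive model of ZFA, by the standard Fraenkel--Mostowski construction.

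The rest of the argument hinges on two simple observations: (i) every $x\in M$ has $\stab(x)\in\mathcal{F}$, and (ii) if $\stab(x)\in\mathcal{F}$ and every element of $x$ lies in $M$, then $x\in M$. A useful corollary is that for any finite subset $F$ of a set in $M$, the pointwise stabilizer $\stab_{pw}(F)=\bigcap_{y\in F}\stab(y)$ lies in $\mathcal{F}$, so $F\in M$; moreover, any function $h$ defined on $F$ with values in a pure set (such as the universe $D$ of a finite template, which we may arrange to be pure up to isomorphism) satisfies $\stab(h)\supseteq\stab_{pw}(F)$, hence $h\in M$ as well.

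To verify $K_\mathcal{D}$ in $M$, let $\mathcal{X}\in M$ be a $\mathcal{D}$-instance that is finitely solvable in $M$. Since $M\subseteq V$, every $M$-homomorphism is also a $V$-homomorphism, so $\mathcal{X}$ is finitely solvable in $V$; and by (i), $\mathcal{X}$ is $\mathcal{F}$-symmetric. Hypothesis (\ref{c:forevery}) then supplies an $\mathcal{F}$-symmetric $h\colon\mathcal{X}\to\mathcal{D}$, and (ii), together with $X\in M$ and $D$ being pure, places $h$ in $M$. To verify $\neg K_\mathcal{E}$ in $M$, realize $\mathcal{Y}$ from hypothesis (\ref{c:example}) on atoms $Y\subseteq A$. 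The conditions $\stab(\mathcal{Y})\in\mathcal{F}$ and $\stab(y)\in\mathcal{F}$ for each $y\in Y$, combined with (ii), give $\mathcal{Y}\in M$; the corollary above ensures that every homomorphism from a finite $F\subset Y$ to $\mathcal{E}$ provided by the $V$-side finite solvability automatically lies in $M$, so $\mathcal{Y}$ is finitely solvable in $M$. But any $\mathcal{Y}\to\mathcal{E}$ homomorphism in $M$ would be $\mathcal{F}$-symmetric in $V$ by (i), contradicting hypothesis (\ref{c:example}).

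The main obstacle is the permutation-model bookkeeping in the first step: specifying $A$ and the extended $\Gamma$-action coherently, and verifying carefully that $M$ really is a transitive model of ZFA in which the above exchange between $V$-level and $M$-level objects is legitimate. Once this standard groundwork is in place, the passage between $V$ and $M$ via symmetry is routine, and it clarifies why hypothesis (\ref{c:forevery}) is framed in terms of symmetric homomorphisms: a map lies in $M$ exactly when its stabilizer lies in $\mathcal{F}$, modulo the auxiliary pieces already being in $M$.
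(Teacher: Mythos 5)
Your proposal is correct and takes essentially the same route as the paper: build a Fraenkel--Mostowski permutation model over a copy of $Y$ as atoms with the given $\Gamma$-action and filter $\mathcal{F}$, then use (i) membership in the model implies $\mathcal{F}$-symmetry and (ii) hereditary symmetry plus symmetric stabilizer implies membership, to transfer hypothesis (1) into $K_\mathcal{D}$ and hypothesis (2) into $\neg K_\mathcal{E}$. The only point worth tightening is the sentence deducing finite solvability of $\mathcal{X}$ in $V$ from that in $M$ --- the relevant step is not that $M$-homomorphisms are $V$-homomorphisms, but that every finite substructure of $\mathcal{X}$ formed in $V$ already lies in $M$ (which your corollary does establish), so the $M$-level finite solvability transfers.
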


    \begin{proof}
        See Section~\ref{s:easy}.
    \end{proof}
	
	We immediately obtain a more combinatorial form of the theorem using a compactness argument.
	\begin{corollary}
		\label{c:finitecheck}
		Theorem \ref{t:forcingblackbox} holds even if we replace 
		\eqref{c:forevery} by
		
		\begin{enumerate}
			\item[(1')] For every  $\mathcal{F}$-symmetric finitely solvable $\mc{D}$-instance $\mathcal{X}$, there exists some $\Gamma' \in \mathcal{F}$ such that for every finite $S \subset \Gamma'$ and $\{x_1,\dots,x_n\} \subset X$ there exists an $S$-invariant partial homomorphism of $\restriction{\mc{X}}{\{x_1,\ldots,x_n\}}$ to $\mc{D}$.
		\end{enumerate}
		In particular, to check that \eqref{c:forevery} holds it suffices to find a subgroup $\Gamma' \in \mathcal{F}$ such that for all finite $S \subset \Gamma'$ there is an $S$-invariant homomorphism from $\mathcal{X}$ to $\mc{D}$.  
	\end{corollary}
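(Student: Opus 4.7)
The approach will be a straightforward compactness argument in the product space $D^X$: since $(1')$ is a priori no stronger than $(1)$, it suffices to derive $(1)$ from $(1')$ in ZFC, after which the conclusion of Theorem~\ref{t:forcingblackbox} goes through verbatim.

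Fix a finitely solvable $\mathcal{F}$-symmetric $\mc{D}$-instance $\mc{X}$ and let $\Gamma' \in \mathcal{F}$ be the subgroup supplied by $(1')$. The plan is to construct a $\Gamma'$-invariant homomorphism $h\colon \mc{X} \to \mc{D}$; this will suffice because $\Gamma' \subseteq \stab(h)$ forces $\stab(h) \in \mathcal{F}$ by the upward-closure axiom of the filter. Viewing $D$ as a finite discrete space, $D^X$ is compact by Tychonoff's theorem. For each pair of finite subsets $S \subset \Gamma'$ and $F \subset X$, I would define
\[ C_{S,F} \;=\; \{\, g \in D^X : \restriction{g}{SF} \text{ is an } S\text{-invariant partial homomorphism of } \restriction{\mc{X}}{SF} \text{ to } \mc{D}\,\}. \]
Replacing $F$ by $SF$ ensures the domain is $S$-closed, so that $S$-invariance is meaningful in the sense of the paper (where $S$-invariance is setwise stability under the action). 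Each $C_{S,F}$ is clopen in $D^X$, since membership depends only on the finitely many coordinates indexed by $SF$.

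Next I would verify the finite intersection property: given finitely many pairs $(S_i, F_i)$, set $S = \bigcup_i S_i$ and $F = \bigcup_i F_i$, both finite. Applying $(1')$ to $S$ and the finite set $SF$ produces an $S$-invariant partial homomorphism $h_0$ on $SF$, and any extension of $h_0$ to a function $X \to D$ lies in $C_{S,F} \subset \bigcap_i C_{S_i,F_i}$. By compactness, $\bigcap_{S,F} C_{S,F}$ is nonempty, and any element $h$ in this intersection is simultaneously a homomorphism $\mc{X} \to \mc{D}$ (being a partial homomorphism on every finite $F$) and $\Gamma'$-invariant (being $S$-invariant for every finite $S \subset \Gamma'$), as required.

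The \emph{in particular} clause is the same idea, specialized: if for every finite $S \subset \Gamma'$ the set $B_S = \{g \in D^X : g \text{ is an } S\text{-invariant homomorphism } \mc{X} \to \mc{D}\}$ is already nonempty, then each $B_S$ is closed in $D^X$ with $B_{S_1} \cap B_{S_2} = B_{S_1 \cup S_2}$, so Tychonoff again yields a $\Gamma'$-invariant homomorphism. I do not anticipate any real obstacle here beyond bookkeeping the $S$-invariance condition on partial functions with $S$-closed domains; the combinatorial core is just compactness of a product of finite discrete spaces.
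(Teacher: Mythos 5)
Your proposal is correct and uses essentially the same compactness argument in the product space $D^X$ as the paper; the only cosmetic difference is that you bundle the $S$-invariance and partial-homomorphism constraints into a single family of clopen sets $C_{S,F}$, whereas the paper works with two separate families of closed sets $C_\gamma$ and $C_{x_1,\ldots,x_n}$ and applies the finite intersection property to their union.
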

	\begin{proof}[Proof of Corollary \ref{c:finitecheck}]
		We show that \eqref{c:forevery} of Theorem \ref{t:forcingblackbox} holds using a compactness argument. Take the compact topological space $D^X$, where $D$ is endowed with the discrete topology. The following subsets are closed in this space for all $\gamma \in \Gamma, x_i \in X,n \in \N$: $C_\gamma=\{h:h \text{ is $\gamma$-invariant}\}$, $C_{x_1,\dots,x_n}=\{h: \restriction{h}{\{x_1,\ldots,x_n\}} \text{ is a partial homomorphism}\}$. By our assumption, the intersection of any finite collection of sets of the form $(C_\gamma)_{\gamma\in \Gamma'}$ or $ (C_{x_1,\dots,x_n})_{x_i \in X}$ is nonempty, hence there is some $h$ in the intersection of all of these sets (note that we work in ZFC, so compactness can be used), and such an $h$ is a $\Gamma'$-invariant, and thereby $\mathcal{F}$-symmetric homomorphism $\mathcal{X} \to \mc{D}$. 
	\end{proof}
    This will directly yield our main result for ZFA models. 
    \begin{theorem}
		\label{c:ZFA}
		There exists a model of ZFA in which $\cpct{D}$ holds precisely if $(*)_\mc{D}$ does.
		
	\end{theorem}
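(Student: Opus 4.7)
The plan is to apply Corollary~\ref{c:finitecheck} with a single choice of group $\Gamma$ and filter $\mathcal{F}$ that simultaneously verifies condition~(2) for $\mc{E}=K_3$ and condition~(1') for every $\mc{D}$ with $(*)_\mc{D}$. Since the ZFA model produced by Theorem~\ref{t:forcingblackbox} depends only on $(\Gamma,\mathcal{F})$, this will yield a single model in which $\cpct{D}$ holds for every $\mc{D}$ with $(*)_\mc{D}$ and $\cpcth$ fails; by Theorem~\ref{c:hardness}, the latter upgrades to $\lnot\cpct{E}$ for every $\mc{E}$ with $\lnot(*)_\mc{E}$. For the setup I take atoms $\{a_{p,n,i}:p\text{ odd prime},\,n\in\omega,\,0\leq i<p\}$ partitioned into cycles $C_{p,n}=\{a_{p,n,0},\ldots,a_{p,n,p-1}\}$, let $\Gamma=\bigoplus_{p,n}\mathbb{Z}/p\mathbb{Z}$ with each summand rotating its cycle, and let $\mathcal{F}$ be the filter generated by the pointwise stabilizers $\stab_{pw}(F)$ of finite atom sets. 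Since $\Gamma$ is abelian and every element has squarefree order, each finite subgroup of $\Gamma$ is a direct sum of elementary abelian $p$-subgroups, which will be crucial below.

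To verify (2), I take $\mc{Y}$ to be the graph whose edges are the disjoint union of the $p$-cycles on each $C_{p,n}$. It is $\Gamma$-invariant, hence $\mathcal{F}$-symmetric, and its finite substructures are finite unions of arcs and odd cycles, all $3$-colorable. For any proper $3$-coloring $h$ and any finite $F$, $\stab_{pw}(F)$ contains the rotation of some cycle $C_{p,n}$ disjoint from $F$; since $h$ is non-constant on $C_{p,n}$, this rotation does not preserve $h$, so $\stab(h)\notin\mathcal{F}$. To verify (1') for $\mc{D}$ with $(*)_\mc{D}$ and an $\mathcal{F}$-symmetric finitely solvable instance $\mc{X}$, I pick $F_0$ with $\stab_{pw}(F_0)\leq\stab(\mc{X})$ and set $\Gamma'=\stab_{pw}(F_0)\in\mathcal{F}$. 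Given finite $S\subset\Gamma'$ and $\{x_1,\ldots,x_n\}\subset X$, let $H=\langle S\rangle$ and $F=H\cdot\{x_1,\ldots,x_n\}$; by finite solvability, fix any partial homomorphism $h_0:\restriction{\mc{X}}{F}\to\mc{D}$. Enumerating generators $\alpha_1,\ldots,\alpha_m$ of $H$ with $\alpha_j$ of prime order $p_j$, iteratively define
\[
h_{j+1}=\phi_{p_j}(h_j,\,\alpha_j\cdot h_j,\,\ldots,\,\alpha_j^{p_j-1}\cdot h_j),
\]
where $\phi_{p_j}$ is a cyclic polymorphism of $\mc{D}$ of arity $p_j$. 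The cyclic property yields $\alpha_j$-invariance of $h_{j+1}$, while the commutativity of $H$ preserves the invariance under previously treated generators at each step, so $h_m$ is $H$-invariant and hence $S$-invariant.

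The main obstacle is that $(*)_\mc{D}$ guarantees cyclic polymorphisms of $\mc{D}$ only for prime arities $p_j>|D|$, whereas $H$ may contain elements whose prime order is at most $|D|$. Bridging this gap requires either a finer design of $\mathcal{F}$ that admits subgroups $\Gamma'_\mc{D}\in\mathcal{F}$ consisting solely of rotations of cycles $C_{p,n}$ with $p>|D|$ — while preserving condition~(2) for $K_3$ — or a supplementary argument that exploits the $\mathcal{F}$-symmetry and finite solvability of $\mc{X}$ to treat small-prime cycles by directly producing $\mathbb{Z}/p\mathbb{Z}$-invariant partial homomorphisms on them. I expect this balance between (1') and (2) — making $\mathcal{F}$ rich enough to symmetrize for all tractable $\mc{D}$ yet restrictive enough to rule out a symmetric $3$-coloring of $\mc{Y}$ — to be the central technical difficulty in executing the full proof.
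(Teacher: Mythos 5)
Your overall strategy is the same as the paper's: apply Corollary~\ref{c:finitecheck} once with a fixed $(\Gamma,\mathcal{F})$ so that the resulting ZFA model satisfies $\cpct{D}$ for every $\mc{D}$ with $(*)_{\mc{D}}$ and $\lnot\cpcth$, and then use Theorem~\ref{c:hardness} to propagate the failure to every $\mc{E}$ with $\lnot(*)_{\mc{E}}$; the cyclic-polymorphism iteration over prime-order generators is precisely the paper's Claim~\ref{cl:obvious}. However, the gap you flag in your last paragraph is real and is not closed. With your atom set (infinitely many disjoint $p$-cycles for each odd prime $p$) and with $\mathcal{F}$ generated by the pointwise stabilizers of finite atom sets, every $\Gamma'=\stab_{pw}(F_0)$ with $F_0$ finite still contains full rotations of $p$-cycles for \emph{every} prime $p$, because for each $p$ only finitely many of the $p$-cycles meet $F_0$. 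Consequently $\langle S\rangle$ for $S\subset\Gamma'$ can contain elements of any prime order, in particular of orders $p\leq |D|$ for which $(*)_{\mc{D}}$ gives no cyclic polymorphism. Neither of your two sketched escape routes is carried out, so as written the proposal does not prove the theorem.

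The paper resolves this exactly by the \emph{finer design} you suspected was needed, but by changing the atom set rather than patching the filter afterwards: in Definition~\ref{d:model} each prime occurs as exactly one cycle, $Y=\bigcup_i\{i\}\times\{0,\dots,p_i-1\}$, and $\mathcal{F}$ is generated by the subgroups $\langle\gamma_{p_i}:i\geq n\rangle$. With this setup $\stab_{pw}(\{(i,j):i\leq n\})=\langle\gamma_{p_i}:i>n\rangle$ consists \emph{only} of rotations of cycles whose sizes are primes $>p_n$. Given $\mc{D}$ with $(*)_{\mc{D}}$ one picks $n$ with $p_n\geq |D|$ and sets $\Gamma''=\Gamma'\cap\stab_{pw}(\{(i,j):i\leq n\})\in\mathcal{F}$; every finite $S\subset\Gamma''$ lies in a subgroup generated by finitely many $\gamma_{p_i}$ with $p_i>|D|$, which are exactly the arities for which condition~(3) of Theorem~\ref{t:cyclic_polymorphisms} supplies cyclic polymorphisms, and Claim~\ref{cl:obvious} applies verbatim. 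Having only one cycle per prime does not affect your verification of condition~(2) for $K_3$: any $\mathcal{F}$-symmetric coloring would have to be fixed by $\gamma_{p_i}$ for all large $i$, hence constant on the corresponding $p_i$-cycle, a contradiction. If you replace your atoms and filter by those of Definition~\ref{d:model}, the rest of your argument goes through and coincides with the paper's proof.
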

 
	Finally, using forcing arguments, one can obtain a ZF result as well.
	
	\begin{theorem}
		\label{t:forcing} There exists a model of ZF in which $\cpct{\mc{D}}$ holds precisely if $(*)_{\mc{D}}$ does.
	\end{theorem}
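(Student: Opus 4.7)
The plan is to mimic the permutation-model construction of Theorem~\ref{c:ZFA} inside a ZFC forcing extension via the standard passage from Fraenkel--Mostowski models to symmetric submodels of generic extensions (see \cite[Chapter~15]{jech2003set}). Concretely, I would replace the set of atoms $A$ on which $\Gamma$ acts by a $\Gamma$-indexed family of generic Cohen reals: set $P = \mathrm{Fn}(A \times \omega, 2, \omega)$, and let $\Gamma$ act on $P$ by permuting the first coordinate. The filter $\mathcal{F}$ of subgroups of $\Gamma$ used in the proof of Theorem~\ref{c:ZFA} (generated by pointwise stabilizers of finite subsets of $A$) transfers to this setting verbatim.

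Next, form the symmetric submodel $HS^G$ in the usual way: a $P$-name $\tau$ is symmetric if $\{\gamma \in \Gamma : \gamma\tau = \tau\} \in \mathcal{F}$, and hereditarily symmetric if this holds recursively all the way down. Standard arguments then show that $HS^G$ is a transitive class satisfying ZF which contains the generic sequence $(r_a)_{a \in A}$; this sequence plays the role formerly played by the atoms of the ZFA model, and the symmetry lemma ($p \Vdash \varphi(\tau) \iff \gamma p \Vdash \varphi(\gamma\tau)$) provides the indistinguishability that previously came from $\Gamma$ acting on atoms.

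With this setup, the two verifications parallel the ZFA proof of Theorem~\ref{c:ZFA}. For $\lnot (*)_\mc{D}$, the instance $\mathcal{X}$ witnessing failure of $\cpct{D}$ in the ZFA model has an obvious hereditarily symmetric name built from the $r_a$'s; by the symmetry lemma, the nonexistence of an $HS^G$-homomorphism $\mathcal{X} \to \mc{D}$ reduces to the nonexistence of a $\Gamma$-symmetric homomorphism in $V[G]$, which can be established as before. For $(*)_\mc{D}$, given a finitely solvable $\mathcal{X} \in HS^G$ whose name is stabilized by some $\Gamma' \in \mathcal{F}$, I would invoke the cyclic-polymorphism plus compactness argument of Corollary~\ref{c:finitecheck} inside the ZFC model $V[G]$ to produce a $\Gamma'$-invariant homomorphism $h : \mathcal{X} \to \mc{D}$.

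The main obstacle is this last step: AC in $V[G]$ yields $h$ with no control on its name, so one must upgrade to a homomorphism that admits a hereditarily symmetric name. The natural approach is to note that the set of $\Gamma'$-invariant homomorphisms is a $\Gamma'$-invariant, nonempty closed subset of $D^X$ definable from $\mathcal{X}$, and to extract a canonical representative --- for instance, by applying the cyclic polymorphism coordinatewise to an arbitrary $h_0$ and its $\Gamma'$-translates, or by using a fixed well-ordering of names of bounded rank in the ground model to select from the $\Gamma'$-invariant subset. Alternatively, one may sidestep this construction by invoking a general transfer principle (Pincus' embedding theorem, or a suitable version of Jech--Sochor) to push the ZFA conclusion of Theorem~\ref{c:ZFA} directly into a model of ZF with the same compactness behaviour.
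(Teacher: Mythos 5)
Your high-level strategy is right, but your proposed forcing is too naive and your suggested fixes for the crucial name-symmetry step either break symmetry or don't obviously apply.

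\paragraph{The forcing itself must be modified.} You propose $P = \mathrm{Fn}(A\times\omega,2,\omega)$ with $\Gamma = \oplus_p \mathbb{Z}_p$ acting only on the first coordinate. This will not reproduce the ``hard'' direction in the symmetric extension. To show that the graph $\dot{\mc{Y}}'$ has no $3$-coloring in $N$, you need (cf.\ the paper's Claim~\ref{cl:action2}(2)) that for any condition $q$ and any cycle index $i$, there is a group element rotating the $i$-th cycle whose translate of $q$ remains \emph{compatible} with $q$. With your group each cycle has a unique generator $\gamma_{p_i}$, and once $q$ commits to values on Cohen reals attached to two atoms in cycle $i$, $\gamma_{p_i}\cdot q$ and $q$ will typically be incompatible, so the Symmetry Lemma cannot be invoked. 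The paper resolves this by passing to $\Delta = \oplus_i \Gamma_i$ where $\Gamma_i = \oplus_{n\in\N}\mathbb{Z}_{p_i}$ (so each cycle group occurs with infinite multiplicity), and by having the generator $\gamma_{p_i,n}$ act \emph{simultaneously} on the $Y$-coordinate by rotation and on the second coordinate by a translation of $\Gamma_i$ read off via a bijection $b_i$. This guarantees that for any finite $q$ there is always a fresh $\gamma_{p_i,n}$ with $\gamma_{p_i,n}\cdot q$ disjoint from $q$ in domain, hence compatible. You also need the filter $\mathcal{H}$ regenerated from the new generators, not the filter you wrote verbatim.

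\paragraph{Producing a hereditarily symmetric name.} Your proposed alternatives are both problematic. Selecting a ``canonical representative'' via a well-ordering of names destroys the invariance you are trying to establish: the selected name will not in general be fixed by $\Gamma'$. Appealing to Pincus/Jech--Sochor transfer is also shaky in the direction you need it: $\lnot K_{\mc{D}}$ is an existential statement and transfers (or, more simply, is equivalent to $\lnot\mathrm{UL}$ by the hard half of Theorem~\ref{c:hardness}), but $K_{\mc{D}}$ is a $\Pi_2$ statement over all structures $\mc{X}$ and is not of the injectively bounded form those transfer theorems cover; you would need to justify that transfer applies, which is not automatic. What the paper actually does is run the cyclic-polymorphism argument \emph{on names}: it defines $\homo_q$ and $\inv_{q,\gamma}$, applies a name-level analogue of Claim~\ref{cl:obvious} (Lemma~\ref{l:invariance}) inductively to produce a sequence $(\dot{h}_n)$ of increasingly invariant homomorphism-names, uses compactness of $D^X$ \emph{inside $M[G]$} to find a limit forced by some $q'$ to lie in $\bigcap_n\overline{\{\dot{h}_i : i\ge n\}}$, and proves (Lemma~\ref{l:invarianceh}) that this limit name is forced to be $\gamma$-invariant for all $\gamma\in\stab_{pw}(q')\cap\Delta'$. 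Even that is not yet a hereditarily symmetric name; the final step is to truncate the name by $q'$, setting $\dot{h}'(\dot{x}) := \dot{h}(\dot{x})\wedge q'$, and to show that this literally satisfies $\gamma\cdot\dot{h}'=\dot{h}'$ (as names), which then places $\dot{h}'$ in $HS$. This ``truncate by $q'$'' trick is the step your proposal is missing; without it, ``forced-to-be-invariant'' does not give a symmetric name, and the whole argument stalls exactly where you identified the obstacle.
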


   We finish with briefly outlining some of the ideas that we believe might be transferable to the finitary setting and have been already used by Cohen to prove the independence of AC (see \cite[Section 15]{jech2003set}). Our view of forcing is the following: start with a model $M$ of ZFC and a Boolean algebra $\mathcal{B}$, and build a model $M^\mc{B}$ where a statement of the form $x \in y$ instead of having a true/false value, has one in $\mathcal{B}$. Then, taking an appropriate ultrafilter in $\mathcal{B}$ one can ``collapse" the truth value to true/false and get a classical model $N$ extending $M$. 
   
    To mimic the behavior of ZFA models, we start with a Boolean algebra and a subgroup $\Gamma$ of its automorphisms. Now, the action of $\Gamma$ extends to $M^\mc{B}$, and we will consider only the collection $N'$ of elements of $N$, which collapse from elements of $M^\mc{B}$ that are sufficiently invariant under the action. $N'$ turns out to be a model of ZF. In this model, sets of generic reals, which are somewhat indistinguishable from within $M$, will play the role of atoms.
	
	\section{Hard problems are hard}
	\label{s:hard}
	In this section, we show Theorem \ref{t:fin_const_fin_red}:
	\begin{namedthm*}{Theorem \ref{t:fin_const_fin_red}}
		For finite structures $\fD$ and $\fE$ we have the following:
  
            (A) If $\fD$ and $\fE$ are homomorphically equivalent, then they finitely reduce to each other.
            
            (B) If $\fE$ is a $pp$-power of $\fD$, then $\fE$ finitely reduces to $\fD$.
	\end{namedthm*}
	
	\begin{proof}
		(A) Now the structures $\fD$ and $\fE$ are of the same signature and there are homomorphisms $\theta_1:\fD\to\fE$ and $\theta_2:\fE\to\fD$. Set $\Gamma(\fX)=\fX$, $\Phi(\varphi)=\theta_2\circ\varphi$ and $\Psi(\psi)=\theta_1\circ\psi$.
		This clearly works.
		
	(B) {\bf Sketch.} The precise argument is quite heavy on notation, so we give an informal outline. Details can be found in the Appendix.    

            By definition, we have $E=D^n$, and by (\ref{e:simp_def_witness_2}), $(\ovl z_1,\ldots,\ovl z_k)\in R^\fE$ comes with some number of witnesses in $D$.  Given an instance $\fX$ of $\fE$, we build the $\fD$-instance $\Gamma(\fX)$ by adding the following elements:
            \begin{itemize}
                \item $n$ ``formal coordinates'' for every $x \in X$;
                \item the appropriate number of ``formal witnesses of relation'' for every relation symbol $R$ in $\Sigma_{\fE}$ and tuple $\ovl x \in R^\fX$, arising from \eqref{e:simp_def_witness_2}.   
            \end{itemize}

            We must carefully manage using the equality sign in (\ref{e:simp_def_witness_2}). First, we can assume without loss of generality that no witness appears in any of the equalities. Second, the appropriate formal coordinates of those $x \in X$ that appear in relations $R^{\fX}$ whose $pp$-definitions include equalities need to be identified. (Formally, we quotient out by the generated equivalence relation.)   

            We need to transfer homomorphisms (to $\fE$ and $\fD$) between $\fX$ and $\Gamma(\fX)$. On the one hand, a homomorphism $\varphi: \fX \to \fE$  gives rise to a map $\Phi(\varphi):\Gamma(\fX) \to \fD$ in a straightforward way, sending formal coordinates of $x \in X$ to actual coordinates of $\varphi(x)$ (and by necessity this factors through the quotient map). 
            The relations on $\Gamma(\fX)$ are tailored to ensure that $\Phi(\varphi)$ is a homomorphism.
            
            On the other hand, if $\psi:\Gamma(\fX) \to \fD$ is a homomorphism, evaluating $\psi$ on all formal coordinates of each $x \in X$ gives rise to a map $\Psi(\psi): \fX \to D^n=E$. The images of formal witnesses serve as actual witnesses in $D$, and the identification of the necessary formal coordinates ensures that all equalities in (\ref{e:simp_def_witness_2}) are satisfied. The two together imply that $\Psi(\psi)$ is a homomorphism to $\fE$. 

            We also need to check (3) of Definition~\ref{d:finitely_reduces}, regarding the finite substructures. We do this using Remark \ref{r:suff_for_fin_red}, by collecting necessary elements of $X$ into a finite set $F$ to provide enough relations in $\Gamma(\fF)$ for the images of all related tuples of $\fH$. Again, we have to take into account the identification of formal coordinates, which forces us to add extra elements to $F$ to ensure that all the necessary identifications already happen in $\Gamma(\fF)$.
            
            We make choices at multiple points along the construction but these are always possible in ZF because we always choose either finitely many times or from some fixed finite set.
	\end{proof}
	Let us remark that the first version of this manuscript used the technique developed in \cite{riley} to deal with the case of equality. However, Thornton pointed out to us that this is not necessary.
	
	\section{Easy problems are easy}
	
	\label{s:easy}
	In this section, we show Theorem \ref{t:forcingblackbox} and the remaining part of our main result, Theorem \ref{t:main}.
	
	\subsection{ZFA results.} 
	
	First, as a warm-up, we prove the following general theorem about ZFA models.
		\begin{manualtheorem}{\ref{t:forcingblackbox}}
			Let $\Gamma$ be a group and $\mathcal{F}$ be a filter of subgroups of $\Gamma$. Assume that for some structures $\mc{D},\mc{E}$ the below statements can be proved in ZFC:
		\begin{enumerate}
			\item For every finitely solvable $\mathcal{F}$-symmetric  $\mc{D}$-instance $\mathcal{X}$ there exists a $\mathcal{F}$-symmetric homomorphism from $\mathcal{X}$ to $\mc{D}$. 
			
			\label{c:forevery1} 
			\item \label{c:example1} There exists a finitely solvable $\mathcal{F}$-symmetric $\mc{E}$-instance $\mathcal{Y}$ with $\stab(F) \in \mathcal{F}$ for all finite $F \subset Y$, which does not admit an $\mathcal{F}$-symmetric homomorphism to $\mc{E}$.
		\end{enumerate}
		Then there exists a model of ZFA in which $\cpct{D}$ holds, but $\cpct{E}$ fails. In fact, the model depends only on $\Gamma$, $\mathcal{F}$ and $\mathcal{Y}$. 
	\end{manualtheorem}

	\begin{proof}
		Let $\mc{Y}$ be the structure from \eqref{c:example1}.
		
		We build a permutation submodel of the universe $V$ as in \cite[15.48]{jech2003set}. Let the set of atoms $Y'$ be chosen so that there is a bijection $b:Y' \to Y$ and define $\mathcal{Y}'$ to be a structure on $Y'$ by pulling back the relations form $\mc{Y}$. Similarly, define the $\Gamma$ action on $Y'$ by $\gamma \cdot y'=b^{-1}(\gamma \cdot b(y')).$
		
		Let $U$ be the permutation model corresponding to the set of atoms $Y'$ and the action $\cdot$ defined above. By the $\mathcal{F}$-symmetricity of $\mc{Y}$, $\mc{Y}' \in U$, and by the nonexistence of an $\mathcal{F}$-symmetric homomorphism to $\mc{E}$, the $\mc{E}$-instance $\mc{Y}'$ is not solvable in $U$. As the stabilizer of every finite set $F \subset Y'$ is in the filter, every partial map $Y' \to E$ with a finite domain is in $U$, in particular, $\mc{Y}'$ is finitely solvable. Thus, $U \models \lnot K_\mc{E}$.  
		
		Now, let $\mc{X} \in U$ be a finitely solvable $\mc{D}$-instance. We check that it is such an $\mathcal{F}$-symmetric instance in $V$ as well. Indeed, $\mc{X}$ is hereditarily $\mathcal{F}$-symmetric in $V$. To see that it is finitely solvable, just note that as $\mathcal{F}$ is a filter, any finite set containing only hereditarily $\mathcal{F}$-symmetric elements is in $U$, so every finite substructure of $\mc{X}$ in $V$ is also in $U$.
		
		By our assumptions, it admits an $\mathcal{F}$-symmetric homomorphism $h$ to $\mc{D}$ in $V$. But, as all elements of $X$ are hereditarily symmetric, so are all elements of $h$. Thus, $h$ is hereditarily symmetric as well, in particular, $h \in U$. 		
	\end{proof}
	
	Now we apply the above theorem in our particular case. 
	\begin{theorem}
		\label{c:ZFAa}
		There exists a model of ZFA in which $\cpct{D}$ if $(*)_\mc{D}$ does, and $K_{\mc{K}_3}$ fails.
		
	\end{theorem}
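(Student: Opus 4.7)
The plan is to apply Theorem \ref{t:forcingblackbox} through Corollary \ref{c:finitecheck}, with a single triple $(\Gamma, \mathcal{F}, \mc{Y})$ that works uniformly for every $\mc{D}$ with $(*)_\mc{D}$. I partition the atoms as $A = \bigsqcup_{p \text{ prime},\, i \in \N} C_{p,i}$, where each $C_{p,i}$ is a copy of $\Z/p$, so that there are infinitely many ``$p$-cycles'' for every prime $p$. I let $\Gamma = \bigoplus_{p,i} \Z/p$ act on $A$ by rotating each $C_{p,i}$ independently, and for each $N \geq 0$ I write $A_N \leq \Gamma$ for the subgroup generated by rotations of cycles of length $> N$. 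The filter $\mathcal{F}$ consists of all subgroups $\Delta \leq \Gamma$ containing $\stab_{pw}(F) \cap A_N$ for some finite $F \subset A$ and some $N$; this is indeed a filter because $(\stab_{pw}(F_1) \cap A_{N_1}) \cap (\stab_{pw}(F_2) \cap A_{N_2}) = \stab_{pw}(F_1 \cup F_2) \cap A_{\max(N_1,N_2)}$ and $\Gamma$ is abelian. Finally, $\mc{Y}$ is the graph on $A$ whose connected components are the $C_{p,i}$, each viewed as a $p$-cycle graph.

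For condition \eqref{c:example} of Theorem \ref{t:forcingblackbox}, I observe that $\stab(\mc{Y}) = \Gamma$, $\stab(F') \supseteq \stab_{pw}(F') \in \mathcal{F}$ for every finite $F' \subset A$, and every finite subgraph of $\mc{Y}$ lies inside finitely many $p$-cycles and hence is 3-colorable. To refute any $\mathcal{F}$-symmetric 3-coloring: if $h: \mc{Y} \to K_3$ had $\stab(h) \supseteq \stab_{pw}(F) \cap A_N$, I would pick any prime $p > N$ and index $i$ with $C_{p,i} \cap F = \emptyset$; the generator of the $(p,i)$-factor of $\Gamma$ lies in $\stab_{pw}(F) \cap A_N$, cyclically shifts the vertices of the cycle $C_{p,i}$, and therefore cannot preserve any proper 3-coloring of $C_{p,i}$ (the only rotation-invariant coloring would be constant).

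For condition \eqref{c:forevery}, I fix $\mc{D}$ with $(*)_\mc{D}$ and an $\mathcal{F}$-symmetric finitely solvable $\mc{X}$ with $\stab(\mc{X}) \supseteq \stab_{pw}(F_0) \cap A_{N_0}$, and set $\Gamma' = \stab_{pw}(F_0) \cap A_{\max(N_0, |D|)} \in \mathcal{F}$. By Corollary \ref{c:finitecheck}, it suffices to produce an $S$-invariant homomorphism $\mc{X} \to \mc{D}$ for every finite $S \subset \Gamma'$. Any element of $\Gamma'$ acts only on cycles of length $> |D|$, so the finite abelian group $\langle S \rangle$ has all prime factors of its order exceeding $|D|$; by $(*)_\mc{D}$, $\mc{D}$ admits a cyclic polymorphism of each such prime arity. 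Starting with any homomorphism $h_0 : \mc{X} \to \mc{D}$ obtained from finite solvability via ZFC compactness, I fix a composition series $\langle S \rangle = K_0 \rhd K_1 \rhd \cdots \rhd K_m = \{1\}$ with prime cyclic quotients and iterate: given $h$ that is $K_{j+1}$-invariant and $\alpha \in K_j$ with $\alpha^q \in K_{j+1}$ for $q = [K_j : K_{j+1}]$ prime, I replace $h$ by the pointwise application of a cyclic $q$-ary polymorphism $\phi_q$ of $\mc{D}$ to $(h, \alpha \cdot h, \ldots, \alpha^{q-1} \cdot h)$. The new $h$ is a homomorphism by the polymorphism property and $\alpha$-invariance of $\mc{X}$, is $\alpha$-invariant by cyclicity of $\phi_q$ together with $K_{j+1}$-invariance of the old $h$, and remains $K_{j+1}$-invariant because $\Gamma$ is abelian. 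After $m$ iterations $h$ is $\langle S \rangle$-invariant. The main conceptual obstacle is designing $\mathcal{F}$ to be simultaneously tight enough that every candidate $\mathcal{F}$-symmetric 3-coloring of $\mc{Y}$ is refuted by some large-prime cycle rotation, yet loose enough to provide, for every $\mc{D}$ with $(*)_\mc{D}$, a subgroup $\Gamma' \in \mathcal{F}$ whose element orders avoid the small primes at which $\mc{D}$ may lack cyclic polymorphisms; the $A_N$ masks in the generating family of $\mathcal{F}$ are exactly what reconciles these demands.
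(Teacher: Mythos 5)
Your proof is correct, and the overall strategy is the same as the paper's: apply Theorem~\ref{t:forcingblackbox} via Corollary~\ref{c:finitecheck} with a single triple $(\Gamma,\mathcal{F},\mc{Y})$ that works simultaneously for all $\mc{D}$ with $(*)_\mc{D}$, use a disjoint union of prime-length cycles as $\mc{Y}$ to kill $K_{K_3}$, and build $S$-invariant homomorphisms by iteratively applying cyclic polymorphisms of prime arity $> |D|$. However, your combinatorial realization differs from the paper's in two ways that are worth comparing. First, the paper uses exactly one $p$-cycle per prime $p$ (so $\Gamma = \bigoplus_p \Z_p$), which makes the ordinary pointwise-stabilizer filter automatically exclude the small primes: any $\Gamma'' = \stab_{pw}(\text{first $n$ cycles})$ already consists of elements supported only on long cycles. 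You instead take infinitely many cycles per prime ($\Gamma = \bigoplus_{p,i}\Z_p$), so pointwise stabilizers of finite sets would still contain rotations of small-prime cycles; you compensate by building the exclusion into the filter through the $A_N$ masks. Both are valid, but yours is self-inflicted extra work: the $A_N$ masks are only needed because the atom set is redundant. Second, your iteration runs along a composition series of $\langle S\rangle$ with prime cyclic quotients, using that the previous homomorphism is $K_{j+1}$-invariant when applying $\phi_q$ to $\alpha$ with $\alpha^q\in K_{j+1}$; the paper instead just picks finitely many generating rotations $\gamma_{p_i}$ ($n\le i\le k$) with $S\subseteq\langle\gamma_{p_i}\rangle$ and iterates Claim~\ref{cl:obvious} directly over them, which is slightly cleaner since each $\gamma_{p_i}$ already has prime order. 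Your composition-series version is more general and would survive in a non-abelian setting, but here abelianness is being used anyway so it buys nothing. In short: same key lemma and algebraic idea, but the paper's minimal one-cycle-per-prime atom set and generator-by-generator iteration streamline the bookkeeping; your version correctly trades atom-set redundancy for a finer filter.
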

	Let us remark that a careful examination of the constructions presented in Section 
	\ref{s:hard} show that in the model below, $K_\mc{D}$ fails for all $\mc{D}$ with $\lnot(*)_\mc{D}$.
	
	The model is a straightforward modification of the model $\mc{N}2^*(3)$ from \cite{howard1998consequences}, which one could call $\mc{N}2^*(\text{Prime})$.		\begin{definition}
		\label{d:model}
		Let $\Gamma=\oplus_{p \text{ prime}} \mathbb{Z}_p$, with the standard generating set $(\gamma_{p_i})_{p_i}$. Then $\Gamma$ acts on the set \[Y=\bigcup_{\text{$p_i$ is the $i$th prime}}\{i\} \times \{0,\dots,p_{i}-1\},\] by $\gamma_{p_i} \cdot (i,j)=(i,j+1 \mod p_{i})$ and fixing every other element of $Y$. Let $\mathcal{Y}$ be the Schreier graph of $\Gamma$'s action on $Y$ (w.r.t. the generating set $(\gamma_p)_p$).
		Let $\mathcal{F}$ be the filter generated by the subgroups 
		$\langle \gamma_{p_i}:i \geq n \rangle$ for $n \in \N$.
		
	\end{definition}
	
	\begin{proof}
		We apply Theorem \ref{t:forcingblackbox}, using  the group $\Gamma$ defined above.  
		
		\textit{First}, we show that the ``in particular" part of Corollary \ref{c:finitecheck} holds, thereby guaranteeing \eqref{c:forevery} of Theorem \ref{t:forcingblackbox}. 
		
		Let $\mc{D}$ be a structure with $(*)_\mc{D}$ and assume that $\mathcal{X}$ is an $\mathcal{F}$-symmetric finitely solvable instance of $\mc{D}$. Let $\Gamma'$ witness that $\mathcal{X}$ is $\mc{F}$-symmetric, fix an $n \in \N$, a sequence $(\phi_p)_{p \geq p_n}$  of cyclic polymorphisms of arity $p$ as in Definition \ref{d:tractability}. We start with an easy observation. 
		
		\begin{claim}
			\label{cl:obvious}
			Let $\gamma \in \Gamma'$ with $\gamma^p=1$ and $h_0:\mathcal{X} \to \mc{D}$ be a homomorphism.
			Then the map \[h(x)=\phi_p(h_0(x),h_0(\gamma^{-1} \cdot x), h_0(\gamma^{-2}\cdot x), \dots,h_0 (\gamma^{-p+1}\cdot x))\] is a $\{\gamma\}$-invariant homomorphism. Moreover, if $h_0$ is $\{\gamma'\}$-invariant for some $\gamma' \in \Gamma'$, then so is $h$.
			
		\end{claim}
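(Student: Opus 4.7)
The plan is to verify three things about the map $h$: (i) it is a homomorphism $\mathcal{X}\to\mc{D}$; (ii) it is $\{\gamma\}$-invariant; and (iii) if $h_0$ is $\{\gamma'\}$-invariant for some $\gamma'\in\Gamma'$, then so is $h$. Each step will follow from a short bookkeeping argument combining one of the standing hypotheses: that $\phi_p$ is a cyclic polymorphism, that $\gamma^p=1$, that $\Gamma'\subseteq\stab(\mathcal{X})$, and (for (iii)) that the group $\Gamma=\bigoplus_{q\text{ prime}}\mathbb{Z}_q$ from Definition \ref{d:model} is abelian.

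For (i) I would first observe that for each $0\le i\le p-1$ the map $x\mapsto\gamma^{-i}\cdot x$ is an automorphism of $\mathcal{X}$, because $\gamma\in\Gamma'\subseteq\stab(\mathcal{X})$ preserves every relation of $\mathcal{X}$. Composing with $h_0$, the map $h_i\colon x\mapsto h_0(\gamma^{-i}\cdot x)$ is therefore a homomorphism $\mathcal{X}\to\mc{D}$. Since the polymorphism $\phi_p\colon\mc{D}^p\to\mc{D}$ is by definition a homomorphism out of the categorical power (whose relations are defined coordinatewise), the diagonal composition $h=\phi_p\circ(h_0,h_1,\dots,h_{p-1})$ is itself a homomorphism $\mathcal{X}\to\mc{D}$.

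For (ii) I would use $\gamma^p=1$, which gives $\gamma=\gamma^{-(p-1)}$, to rewrite
\[
h(\gamma\cdot x)=\phi_p\bigl(h_0(\gamma^{-(p-1)}\cdot x),\,h_0(x),\,h_0(\gamma^{-1}\cdot x),\,\dots,\,h_0(\gamma^{-(p-2)}\cdot x)\bigr),
\]
and then apply the cyclic identity for $\phi_p$ once to move the first argument to the end, recovering $h(x)$. For (iii), abelianness of $\Gamma$ gives $\gamma^{-i}\gamma'=\gamma'\gamma^{-i}$ for every $i$, so
\[
h(\gamma'\cdot x)=\phi_p\bigl(h_0(\gamma'\cdot x),\,h_0(\gamma'\gamma^{-1}\cdot x),\,\dots,\,h_0(\gamma'\gamma^{-(p-1)}\cdot x)\bigr),
\]
and applying $\{\gamma'\}$-invariance of $h_0$ coordinatewise in each slot yields $h(x)$.

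I do not expect any serious obstacle here; the only detail worth double-checking is the convention about the induced action on functions, namely that ``$\{\gamma\}$-invariance of $h$'' does unpack to $h(\gamma\cdot x)=h(x)$, matching the identity proved above.
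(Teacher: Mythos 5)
Your proof is correct and follows essentially the same three-step argument as the paper: use $\Gamma'$-invariance of $\mathcal{X}$ plus the definition of polymorphism for the homomorphism property, a single cyclic shift together with $\gamma^p=1$ for the $\{\gamma\}$-invariance, and abelianness of $\Gamma$ together with $\{\gamma'\}$-invariance of $h_0$ for the "moreover" part. The only cosmetic difference is that you verify $h(\gamma\cdot x)=h(x)$ directly, whereas the paper writes $(\gamma\cdot h)(x)=h(\gamma^{-1}\cdot x)=h(x)$; these are the same statement under the induced action on functions, as you correctly note.
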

		\begin{proof}
			First note that by the $\Gamma'$-invariance of $\mc{X}$, for any $\delta \in \Gamma'$ the map $x \mapsto h_0(\delta^{-1} \cdot x)$ is a homomorphism.
			Thus, $h$ is a homomorphism, and by the cyclicity of $\phi_p$ we get 
			\[(\gamma \cdot h)(x)=h(\gamma^{-1} \cdot x)=\]\[\phi_p(h_0(\gamma^{-1} \cdot x),h_0(\gamma^{-2} \cdot x),\dots, h_0(\gamma^{-p} \cdot x))=\] 
			\[\phi_p(h_0(\gamma^{-p} \cdot x),h_0(\gamma^{-1} \cdot x),\dots, h_0(\gamma^{-p+1} \cdot x))=h(x).\] 
			
			In order to check the $\{\gamma'\}$-invariance, just observe that $\Gamma'$ is abelian, hence
			\[(\gamma' \cdot h)(x)= h(\gamma'^{-1} \cdot x)=\]
			\[\phi_p(h_0(\gamma'^{-1} \cdot x),h_0(\gamma^{-1}\cdot \gamma'^{-1} \cdot x),  \dots,h_0 (\gamma^{-p+1}\cdot \gamma'^{-1} \cdot x))=\]
			\[\phi_p(h_0(\gamma'^{-1} \cdot x),h_0(\gamma'^{-1}\cdot \gamma^{-1} \cdot x),  \dots,h_0 (\gamma'^{-1}\cdot \gamma^{-p+1}\cdot x))=\]
			\[\phi_p(h_0(x),h_0(\gamma^{-1}(x)), h_0(\gamma^{-2}\cdot x), \dots,h_0 (\gamma^{-p+1}\cdot x))=h(x).\]
		\end{proof}
		Let \[\Gamma''=\Gamma' \cap \stab_{pw}(\{(i,j): i \leq n\}),\]
		which is in $\mathcal{F}$, by the definition of $\mathcal{F}$. Let $S \subset \Gamma''$ be finite.	We have to show that there is an $S$-invariant homomorphism $h:\mathcal{X} \to \mc{D}$. Pick a subsequence sequence $(\gamma_{p_i})_{n \leq i \leq k}$  such that $S \subset \langle \gamma_{p_i}:n \leq i \leq k\rangle$. Since $\mathcal{X}$ is finitely solvable, there is a homomorphism $h_0:\mathcal{X} \to \mc{D}$. Applying Claim \ref{cl:obvious} inductively to $(\gamma_{p_i}), (\phi_{p_i})_{n \leq i \leq k}$ starting from $h_0$, we get a homomorphism invariant under each $(\gamma_{p_i})_{n \leq i \leq k}$, in turn under all elements of $S$. This shows that Corollary \ref{c:finitecheck} holds which implies the first part of Theorem \ref{t:forcingblackbox}.
		
		\textit{Second}, we check \eqref{c:example} of Theorem \ref{t:forcingblackbox} for $\mc{E}=K_3$, i.e., $3$-coloring. 
		
		Let $\mathcal{Y}$ be graph from Definition \ref{d:model}, it is clear that $\mathcal{Y}$ is $\Gamma$-invariant. Moreover, $\Gamma$ is the vertex disjoint union of cycles, hence it admits a $3$-coloring. Now, if $c$ was an $\mathcal{F}$-symmetric $3$-coloring, then we could find a finite set $F \subset Y$ such that for every $\gamma \in \stab_{pw}(F)$ we had $\gamma \cdot c=c$. In particular, for every large enough $p_i$ we had $\gamma_{p_i} \cdot c=c$, which means that $c$ is constant on the cycle corresponding to $p_i$, a contradiction.
		
		To finish the proof of Theorem \ref{c:ZFAa} observe that we used the same $\Gamma$, $\mathcal{F}$ and $\mc{Y}$ for all $\mc{D}$, hence in the model guaranteed by Theorem \ref{t:forcingblackbox} the statement $K_\mc{D}$ holds if $(*)_\mc{D}$ is true. 
	\end{proof}

	\subsection{ZF results.}
	Now we are ready to prove the main result of the paper:
	
	\begin{manualtheorem}{\ref{t:forcing}}
		There is a model of ZF, in which $\cpct{D}$ holds precisely when $(*)_\mc{D}$ does.
	\end{manualtheorem}

	The argument below is the forcing version of the one presented in Corollary \ref{c:ZFA}.
	
	\begin{proof}[Proof of Theorem \ref{t:forcing}]
		We build a symmetric submodel of a generic extension as in \cite[Chapter 15]{jech2003set}.	In order to make the forcing argument work, we need to consider a slightly modified version of the group from Definition \ref{d:model}.	Let $\mc{Y}$ be the graph from Definition \ref{d:model}, define $\Gamma_{i}=\oplus_{n \in \N} \mathbb{Z}_{p_i}$ the infinite fold direct sum of the group $\mathbb{Z}_{p_i}$ with itself, and $\Delta=\oplus_i \Gamma_i$. In other words, $\Delta$ is the sum of $\mathbb{Z}_p$s, where each cyclic group appears infinitely often. Fix an enumeration of the generators $(\gamma_{p_i,n})_{n \in \N, p_i \text{ prime}}$, and let $\mathcal{H}$ be the filter generated by the subgroups $\langle \gamma_{p_i,n}:n \in \N, i \geq l\rangle$, for $l \in \N$. 
		
		To each element of $Y$, we will add a countable set of Cohen reals on which sets $\Delta$ is going to act. To achieve this, we define an action of $\Delta$ on $Y \times \N \times \N$ as follows. For each $i$, fix a bijection $b_i:\N \to \Gamma_i$.
		A generator $\gamma_{p_i,n}$ acts on the $Y$ coordinate as in Definition \ref{d:model}, while for each $Y \ni v=(i,j)$ the corresponding first $\N$ is identified with $\Gamma_i$ via $b_i$, and there $\gamma_{p_i,n}$ acts by translation. Formally, for every $\gamma_{p_i,n}$, let  			
		\[\gamma_{p_i,n}\cdot ((i,j),m,l):=((i,j+1 \mod p_i),b_i^{-1}(\gamma_{p_i,n}b_i(m)),l),\]
		and let $\gamma_{p_i,n}$ fix every other element.
		
		Let us collect the most important observations about this action.

		\begin{claim} 
			\label{cl:action}
			Let $F \subset Y \times \N \times \N$ be finite. Then
			\begin{enumerate}
				\item $\stab_{pw}(F) \in \mathcal{H}$.
				\item $\forall i  \ \exists n \ \forall ((i,j),m,l) \in F \ \gamma_{p_i,n} \cdot ((i,j),m,l) \not \in F$. 
				\item $\forall i, n,m \ $ \[\gamma_{p_i,n} \cdot \{((i,j),m,l):l \in \N\}=\{((i,j+1),b_i^{-1}(\gamma_{p_{i},n}b_i(m)),l):l \in \N\}.\]
			\end{enumerate}
		\end{claim}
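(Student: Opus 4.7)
The plan is to verify the three clauses separately, in each case unpacking the definition of the $\Delta$-action on $Y \times \N \times \N$ and exploiting both the finiteness of $F$ and the fact that each $\Gamma_i$ contains infinitely many pairwise distinct generators $(\gamma_{p_i,n})_n$.

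For (1), since $F$ is finite, the index set $I_F = \{i \in \N : \exists j,m,l\ ((i,j),m,l) \in F\}$ is finite. Choose $l^{*} > \max I_F$. For every $i \geq l^{*}$ and every $n$, the generator $\gamma_{p_i,n}$ acts trivially on any triple whose first coordinate is not of the form $(i,j')$; since no such element lies in $F$, the generator $\gamma_{p_i,n}$ fixes $F$ pointwise. Hence $\langle \gamma_{p_i,n} : n \in \N,\ i \geq l^{*}\rangle \subseteq \stab_{pw}(F)$, and upward closure of the filter $\mathcal{H}$ gives $\stab_{pw}(F) \in \mathcal{H}$.

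For (2), fix $i$ and let $M_i = \{m \in \N : \exists j,l\ ((i,j),m,l) \in F\}$, a finite subset of $\N$. Transfer it to $\Gamma_i$ via $b_i$ and set $T = b_i(M_i) \cdot b_i(M_i)^{-1} \subseteq \Gamma_i$, which is also finite. Since the generators $(\gamma_{p_i,n})_n$ are infinitely many pairwise distinct elements of $\Gamma_i = \bigoplus_{n} \mathbb{Z}_{p_i}$ (one per direct summand), only finitely many of them can lie in $T$, so pick any $n$ with $\gamma_{p_i,n} \notin T$. For this $n$ and every $m, m' \in M_i$ we have $\gamma_{p_i,n} \cdot b_i(m) \neq b_i(m')$, equivalently $b_i^{-1}(\gamma_{p_i,n} b_i(m)) \notin M_i$. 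Unpacking the action, $\gamma_{p_i,n} \cdot ((i,j),m,l) = ((i,j+1 \mod p_i), b_i^{-1}(\gamma_{p_i,n} b_i(m)), l)$, whose middle coordinate is outside $M_i$, hence the whole triple cannot belong to $F$.

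Statement (3) is immediate from the action formula: $\gamma_{p_i,n}$ preserves the third coordinate and thus maps the $l$-fiber bijectively to the claimed set. The only non-routine step in the whole argument is the choice of $n$ in (2), which relies on the observation that the family $(\gamma_{p_i,n})_n$ consists of infinitely many pairwise distinct elements of $\Gamma_i$ (forced by the direct-sum structure of $\Gamma_i$), preventing the finite ``bad'' set $T$ from absorbing every candidate for $n$; everything else is bookkeeping.
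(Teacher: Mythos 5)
Your proof is correct, and parts (1) and (3) follow the only reasonable line of reasoning (finiteness of the index set, upward closure of $\mathcal{H}$, and direct unpacking of the action formula), which is also what the paper tacitly relies on when it calls these ``clear.''

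For (2) you take a mildly different route than the paper. The paper's argument exploits the direct-sum structure of $\Gamma_i=\bigoplus_n\mathbb{Z}_{p_i}$: it pushes the (finite) middle coordinates of the $i$-part of $F$ into $\Gamma_i$ via $b_i$, observes that the union of their supports is finite, and then picks $n$ so that the $n$-th summand is untouched by every element of $b_i(M_i)$; multiplying by $\gamma_{p_i,n}$ then makes the $n$-th coordinate nonzero, so the image leaves $b_i(M_i)$. You instead form the difference set $T=b_i(M_i)\cdot b_i(M_i)^{-1}$ and use only that the $\gamma_{p_i,n}$ are infinitely many distinct elements of $\Gamma_i$, so some $\gamma_{p_i,n}$ avoids the finite set $T$; this is slightly more abstract and would work in any group in which the relevant translates are pairwise distinct, without reference to supports. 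Both arguments are equally short here; the paper's is more concrete and tied to the specific group, yours is marginally more general. Either way the substance is the same pigeonhole observation, so this is a variation rather than a genuinely different approach.
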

		\begin{proof}
			The first and last statements are clear. For the second one, define a partial map $b'_i((i,j),m,l)=b_i(m)$, and let $n$ be such that $\gamma_{p_{i},n}$ does not appear in $b'_i(F)$, that is, the support of every element of $b'_i(F)$ is disjoint from the copy of $\mathbb{Z}_{p_i}$ corresponding to $\gamma_{p_{i},n}$. Then $\gamma_{p_i,n}$ appears in every element of $b'_i (\gamma_{p_i,n} \cdot F)$, in particular, there is no $((i,j),m,l) \in F$ with $\gamma_{p_i,n}\cdot ((i,j),m,l) \in F$.		  
		\end{proof}

		Define $P$ to be the forcing notion consisting of partial functions of $Y \times \N \times \N \to 2$ with finite domain ordered by reverse inclusion. The $\Delta$ action on $Y \times \N$ gives rise to a $\Delta$ action on $P$. 
		\begin{claim}
			\label{cl:action2}
			For every $q \in P$ 
			\begin{enumerate}
				\item $\stab_{pw}(q) \in \mathcal{H}$
				\item $\forall i \ \exists n$ such $\gamma_{p_i,n} \cdot q$ and $q$ are compatible.
			\end{enumerate}
		\end{claim}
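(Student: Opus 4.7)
The plan is to deduce both parts of the claim directly from Claim~\ref{cl:action}, applied to the finite set $F = \dom(q)$ that the definition of $P$ furnishes.

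For (1), I would first unpack the induced action: the $\Delta$-action on $P$ is given by $(\gamma \cdot q)(t) = q(\gamma^{-1} \cdot t)$, equivalently by the action on the graph $\{(t, q(t)) : t \in \dom(q)\} \subseteq (Y \times \N \times \N) \times 2$ with $\Delta$ acting trivially on the second factor. Hence $\gamma \in \Delta$ fixes $q$ pointwise as a set of pairs iff it fixes $\dom(q)$ pointwise, so $\stab_{pw}(q) = \stab_{pw}(\dom(q))$. Since $\dom(q)$ is finite, Claim~\ref{cl:action}(1) yields $\stab_{pw}(q) \in \mathcal{H}$.

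For (2), I fix $i$ and apply Claim~\ref{cl:action}(2) to $F = \dom(q)$ to pick $n$ so that $\gamma_{p_i,n} \cdot ((i,j),m,l) \notin \dom(q)$ for every $((i,j),m,l) \in \dom(q)$. I then partition $\dom(q) = A \sqcup B$, where $A$ consists of triples whose $Y$-coordinate is not of the form $(i, \cdot)$ and $B$ consists of those whose $Y$-coordinate is $(i, \cdot)$. The generator $\gamma_{p_i,n}$ fixes $A$ pointwise and sends $B$ into triples still of the same type, so $\dom(\gamma_{p_i,n} \cdot q) = A \cup \gamma_{p_i,n} \cdot B$. The choice of $n$ guarantees that $\gamma_{p_i,n} \cdot B$ is disjoint from $\dom(q) = A \sqcup B$. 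Consequently $\dom(q) \cap \dom(\gamma_{p_i,n} \cdot q) = A$, and on $A$ both partial functions agree with $q$ because $\gamma_{p_i,n}$ fixes $A$ pointwise. Hence $q$ and $\gamma_{p_i,n} \cdot q$ are compatible.

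I do not foresee a serious obstacle: Claim~\ref{cl:action} was explicitly tailored to package the combinatorics of the $\Delta$-action on $Y \times \N \times \N$, and the present statement is essentially a transfer of those facts to the forcing $P$. The only care needed is the bookkeeping between the action on the base set and the induced action on partial functions, together with tracking which elements of $\dom(q)$ are moved by $\gamma_{p_i,n}$ versus fixed; the partition into $A$ and $B$ makes this transparent.
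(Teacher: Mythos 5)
Your proof is correct and takes exactly the approach of the paper, which simply says ``Applying Claim~\ref{cl:action} to $\dom(q)$ yields the desired conclusions.'' You have merely unpacked the routine bookkeeping (identifying $\stab_{pw}(q)$ with $\stab_{pw}(\dom(q))$, and splitting $\dom(q)$ into the fixed part $A$ and the moved part $B$) that the paper leaves implicit.
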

		\begin{proof}
			Applying Claim \ref{cl:action} to $\dom(q)$ yields the desired conclusions. 
		\end{proof}
		
		Let $B(P)$ be the complete Boolean algebra to which $P$ densely embeds (see \cite[Corollary 14.12]{jech2003set}). We will identify $P$ with its copy in $B(P)$. Let $M$ be the ground model. The action of $\Delta$ on $P$ gives rise to an action on $B(P)$ and on the collection of names $M^{B(P)}$ (see \cite[14.38]{jech2003set}). 
		
		Let $HS$ be the collection of hereditarily $\mathcal{H}$-symmetric names, $G$ be a generic filter on $P$, and $M[G]$ be the forcing extension. Define \[N=\{\dot{x}^G:\dot{x} \in HS\}.\]
		Then $N \subset M[G]$ is a model of ZF by \cite[Lemma 15.51]{jech2003set}. The most important technical tool to be used is the Symmetry Lemma \cite[14.37]{jech2003set}: for every $\gamma \in \Delta$, $q \in B(P)$, formula $\phi$ and names $\dot{x}_0,\dots,\dot{x}_{k-1}$ we have			
		\[q \Vdash \phi(\dot{x}_0,\dots,\dot{x}_{k-1}) \iff \gamma \cdot q \Vdash \phi(\gamma \cdot \dot{x}_0,\dots,\gamma \cdot \dot{x}_{k-1}).\]

		Let us first show the easier part of the theorem.
		
		\begin{proposition}
			\label{p:fails}
			$K_{\mc{K}_3}$ fails in $N$. In particular, by Corollary \ref{c:hardness}, if $\lnot (*)_\mc{E}$ then $N \models \lnot K_{\mc{E}}$.
		\end{proposition}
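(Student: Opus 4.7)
The plan is to build in $N$ a graph $\mc{Z}$ which is isomorphic (inside $M[G]$) to $\mc{Y}$, yet has no $3$-coloring in $N$. Finite $3$-colorability in $N$ will come for free because every finite subgraph of $\mc{Y}$ is a disjoint union of paths and at most finitely many odd-prime cycles, hence $3$-colorable, and any finite function assembled from elements of $N$ lies in $N$. The difficulty is the nonexistence of a global $3$-coloring.

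To describe the vertices of $\mc{Z}$, for each $v \in Y$ and $m \in \N$ let $\dot{a}_{v,m}$ be the canonical name for the generic set $\{l \in \N : G((v,m,l)) = 1\} \subseteq \N$, and let $\dot{A}_v$ be the canonical name for $\{\dot{a}_{v,m}^G : m \in \N\}$. By Claim \ref{cl:action}(3), any $\gamma \in \Delta$ which fixes $v \in Y$ merely permutes the family $\{\dot{a}_{v,m} : m \in \N\}$, so it fixes the unordered set $\dot{A}_v$; combined with Claim \ref{cl:action}(1) this shows $\dot{A}_v \in HS$. Form a symmetric name $\dot{\mc{Z}} \in HS$ for the graph with vertex set $\{\dot{A}_v^G : v \in Y\}$ and edges $\{(\dot{A}_v^G, \dot{A}_{v'}^G) : (v,v') \in E(\mc{Y})\}$. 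Since distinct $v$'s give rise to realizations built from disjoint forcing coordinates, $v \mapsto \dot{A}_v^G$ is injective in $M[G]$, and so $\mc{Z} := \dot{\mc{Z}}^G$ is graph-isomorphic to $\mc{Y}$ in $M[G]$ (though not necessarily in $N$).

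The main obstacle is to show $\mc{Z}$ admits no proper $3$-coloring in $N$. Suppose $c = \dot{c}^G$ is one with $\dot{c} \in HS$, and pick $l$ so that $\stab(\dot{c}) \supseteq \langle \gamma_{p_i,n} : n \in \N, i \geq l\rangle$. Fix any $i \geq l$ and set $v := (i,0)$, $v' := (i,1)$; a direct computation from the definition of the $\Delta$-action on $Y \times \N \times \N$ gives $\gamma_{p_i,n} \cdot \dot{A}_v = \dot{A}_{v'}$ for every $n$. Because $v$ and $v'$ are adjacent in $\mc{Y}$, $m := c(\dot{A}_v^G)$ must differ from $m' := c(\dot{A}_{v'}^G)$; fix $q_0 \in G$ forcing both values. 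By Claim \ref{cl:action2}(2) choose $n$ with $\gamma_{p_i,n} \cdot q_0$ compatible with $q_0$. The Symmetry Lemma together with $\gamma_{p_i,n} \cdot \dot{c} = \dot{c}$ then yields
\[\gamma_{p_i,n} \cdot q_0 \Vdash \dot{c}(\dot{A}_{v'}) = \check{m},\]
so any common extension of $q_0$ and $\gamma_{p_i,n} \cdot q_0$ forces $\dot{c}(\dot{A}_{v'})$ to be simultaneously $\check{m}$ and $\check{m}'$, giving $m = m'$, a contradiction.

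The principal technical point is the verification that $\gamma_{p_i,n} \cdot \dot{A}_v = \dot{A}_{v'}$ and the symmetry bookkeeping confirming $\dot{A}_v, \dot{\mc{Z}} \in HS$; once these are in place, Claim \ref{cl:action2}(2) and the Symmetry Lemma deliver the contradiction in a single step.
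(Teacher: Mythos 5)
Your proposal is correct and follows essentially the same route as the paper's proof: build a symmetric name $\dot{\mc{Z}}$ (the paper's $\dot{\mc{Y}}'$) for a copy of the graph $\mc{Y}$ from Definition~\ref{d:model} on the names $\dot{A}_v$, observe that the $\dot{A}_v$ are hereditarily symmetric via Claim~\ref{cl:action}, and then use Claim~\ref{cl:action2}(2) plus the Symmetry Lemma to kill any putative symmetric coloring name $\dot{c}$. The only (minor) deviation is in the final contradiction: the paper fixes $q$ with $\gamma_{p_i,n}\cdot q = q$ so that the common extension $q'\wedge\gamma_{p_i,n}\cdot q'$ stays below $q$ and thereby violates ``$q\Vdash\dot c$ is a proper coloring,'' whereas you derive the contradiction directly by having the common extension force $\check m = \check m'$ for distinct ground-model elements $m\neq m'$; this is a small streamlining but the same mechanism. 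Two cosmetic points worth tightening: you invoke Claim~\ref{cl:action}(1) (which concerns finite subsets of $Y\times\N\times\N$) to show the stabilizer of $v\in Y$ is in $\mathcal H$ — what you actually need is simply that $\langle\gamma_{p_{i'},n}: i'>i\rangle$ fixes $(i,j)$ and lies in $\mathcal H$; and you should note $q_0\in G\subseteq P$ so that Claim~\ref{cl:action2}(2) applies (this is automatic since $G$ is a generic filter on $P$, but worth saying).
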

		\begin{proof}
			The poset $P$ adds a collection of Cohen reals indexed by $Y \times \N$, let $(\dot{r}_{(v,m)})_{v,m}$ be names for these. Let $\dot{A}_v$ contain all these names, i.e., $\dot{A}_v(\dot{r}_{v,m})=1,$
			and no other name appears in the domain of $\dot{A}_v$. Observe that by Claim \ref{cl:action} for all $\gamma_{p_i,n}$ and $v=(i',j)$ we have 
			$\gamma_{p_i,n} \cdot \dot{A}_{i',j}= \dot{A}_{i',j+1 \mod p_i}$ if $i=i'$ and  $\gamma_{p_i,n} \cdot \dot{A}_{i',j}= \dot{A}_{i',j}$ otherwise. In particular, all names $\dot{A}_v$ are in $HS$. These will play the role of atoms in $N$.

			Let $\dot{\mc{Y}}'$ be the name for a graph on the vertices $\dot{A}_v$, where $\dot{A}_v$ and $\dot{A}_{v'}$ are adjacent precisely if $v$ and $v'$ are in $\mc{Y}$. Note that $\dot{\mc{Y}}'$ is in $HS$, thus, $\dot{\mc{Y}}'^G$ is in $N$. Clearly, $\dot{\mc{Y}}'^G$ is isomorphic to $\mc{Y}$, therefore, every finite subgraph admits a $3$-coloring in $N$. 
			
			Now, assume that $\mc{Y}'$ admits a $3$-coloring in $N$. Then, for some $q \in G$ and $\dot{c} \in HS$ we have \[q \Vdash \dot{c} \text{ is a homomorphism } \dot{\mc{Y}}' \to K_3.\] Therefore, there is an $i$ such that for each $\gamma_{p_i,n}$ we have $\gamma_{p_i,n} \cdot \dot{c}=\dot{c}$ and $\gamma_{p_i,n} \cdot  q=q$. Also, there exists some $q' \leq q$ with $q' \in P$ and $d \in D$ such that 
			\[q' \Vdash \dot{c}(\dot{A}_{(i,0)})=d.\]
			By Claim \ref{cl:action2} there exists some $n$ with $\gamma_{p_i,n} \cdot q'$ compatible with $q'$. Then by the Symmetry Lemma
			\[\gamma_{p_i,n} \cdot q' \Vdash (\gamma_{p_i,n} \cdot\dot{c})(\gamma_{p_i,n} \cdot \dot{A}_{(i,0)})=d,\]
			hence, 
			\[q' \vee \gamma_{p,i,n} \cdot q' \Vdash \dot{c}(\dot{A}_{(i,0)})=d=\dot{c}(\dot{A}_{(i,1)}).\]
			As $\gamma_{p_i,n} \cdot q=q$, we have $q'  \vee \gamma_{p_i,n} \cdot q' \leq q$, contradicting that $q$ forces that $\dot{c}$ is a $3$-coloring.
		\end{proof}

		Now we turn to the proof of the more involved part:
		\begin{proposition}
			\label{pr:mainhard}
			If $(*)_\mc{D}$ holds then $N \models K_\mc{D}$.
		\end{proposition}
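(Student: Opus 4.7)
The plan is to translate the ZFA proof of Theorem~\ref{c:ZFAa} to the forcing setting, carrying out the cyclic polymorphism averaging of Claim~\ref{cl:obvious} at the level of names and using the Symmetry Lemma in place of direct group actions on atoms. Let $\dot{\mc{X}} \in HS$ and $q_0 \in P$ be such that $q_0 \Vdash$ ``$\dot{\mc{X}}$ is a finitely solvable $\mc{D}$-instance,'' and enlarge $l$ so that $\Delta_l := \langle \gamma_{p_i,n} : n\in\N,\, i\geq l\rangle \in \mc{H}$ stabilizes both $\dot{\mc{X}}$ and $q_0$. The goal is to produce a name $\dot{h} \in HS$ and a condition $q \leq q_0$ with $q \Vdash $ ``$\dot{h}$ is a homomorphism $\dot{\mc{X}} \to \mc{D}$.''

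In a generic extension $M[G]$ containing $q_0$, finite solvability and ZFC give a homomorphism $h_0: \mc{X} \to \mc{D}$ with some (possibly non-symmetric) name $\dot{h}_0$. The key step is the name-level version of Claim~\ref{cl:obvious}: given a generator $\gamma = \gamma_{p_i,n}$ of order $p_i$ and the cyclic polymorphism $\phi_{p_i}$ supplied by $(*)_\mc{D}$, apply $\phi_{p_i}$ pointwise to $\dot{h}_0, \gamma\cdot\dot{h}_0, \ldots, \gamma^{p_i-1}\cdot\dot{h}_0$. The Symmetry Lemma, together with $\gamma \cdot q_0 = q_0$ and $\gamma \cdot \dot{\mc{X}} = \dot{\mc{X}}$, implies that each conjugate $\gamma^k \cdot \dot{h}_0$ is still forced to be a homomorphism, and the polymorphism property of $\phi_{p_i}$ then ensures that the composite is as well. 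Cyclicity of $\phi_{p_i}$ makes the composite fixed by $\gamma$, and the abelian structure of $\Delta_l$ guarantees that invariances under previously treated generators are preserved when iterating. This produces, for each finite $S \subset \Delta_l$, a name $\dot{h}_S$ that is forced to be a homomorphism and is pointwise fixed by $S$.

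To pass from these finitely-symmetric names to a single $\Delta_l$-symmetric one, I would invoke the compactness argument of Corollary~\ref{c:finitecheck} inside $M[G] \models$ ZFC: the evaluated homomorphisms $\dot{h}_S^G$ live in the compact space $D^X$, the $\gamma$-invariance loci are closed, and the finite intersection property is witnessed by the family $\{\dot{h}_S\}$. This yields a $\Delta_l$-invariant homomorphism $h \in M[G]$. The main obstacle — indeed, the heart of the proof — is upgrading $h$ to an element of $N$ by producing a name $\dot{h} \in HS$ with $\dot{h}^G = h$. The resolution is that the averaging construction is uniform and functorial: the limiting homomorphism is described by a concrete recipe whose ingredients (the polymorphisms $\phi_{p_i}$, the symmetric name $\dot{\mc{X}}$, and the group $\Delta_l$) all lie in symmetric layers of the name hierarchy, so the resulting name is stabilized by $\Delta_l$. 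The pairs inside $\dot{h}$ consist of $HS$-names for elements of $\dot{\mc{X}}$ together with canonical (ground-model) names for elements of $\mc{D}$, so $\dot{h}$ is hereditarily $\mc{H}$-symmetric, i.e., lies in $HS$.
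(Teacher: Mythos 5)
Your proposal follows the paper's overall plan: average a homomorphism name via cyclic polymorphisms at the name level (the paper's Lemma~\ref{l:invariance}), iterate to get names invariant under larger and larger finite $S$, pass to a limit via compactness of $D^X$ inside $M[G]$, and extract a hereditarily symmetric name. Up to and including the application of compactness, this matches the paper.

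The genuine gap is in the final step, which you correctly identify as ``the heart of the proof'' and then do not actually prove. The claim that ``the averaging construction is uniform and functorial, so the resulting name is stabilized by $\Delta_l$'' does not hold up. Two distinct problems are being glossed over. First, the averaging and iteration produce names $\dot{h}_n$ that are only \emph{forced} to be invariant (i.e.\ $q \Vdash \gamma\cdot\dot{h}_n = \dot{h}_n$, which is $\dot{h}_n \in \inv_{q,\gamma}$), not names literally fixed by the group action; your phrase ``pointwise fixed by $S$'' elides this. Second, the compactness argument in $M[G]$ gives an element $h$ of $M[G]$, not a symmetric name: Tychonoff is nonconstructive, and nothing about the ``recipe'' automatically produces a name stabilized by $\Delta_l$. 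The paper handles this in two extra steps: Lemma~\ref{l:invarianceh} shows that a name $\dot{h}$ for the limit is forced by some $q'\le q$ to satisfy $\gamma\cdot\dot{h}=\dot{h}$ for $\gamma\in\Delta'\cap\stab_{pw}(q')$ (this uses, crucially, that the sequence $(\dot{h}_n)$ lies in the ground model $M$ and a careful density/product-topology claim); and then the replacement $\dot{h}'(\dot{x}) := \dot{h}(\dot{x})\wedge q'$ converts forced invariance into literal name-level invariance, so that $\dot{h}'\in HS$. Note also the restriction to $\Delta'\cap\stab_{pw}(q')$ rather than all of $\Delta_l$ — your proposal does not account for the need to fix $q'$, which is what makes $\stab_{pw}(q')$ enter, and it is only fortunate that this intersection is still in $\mc{H}$. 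Without something like these two steps the argument does not close.
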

		
		\begin{proof}
			Fix a $\mathcal{D}$ with $(*)_\mc{D}$ and let $(\phi_p)_{p \geq p^*}$ be the collection of the corresponding cyclic polymorphisms. Assume that we are given a finitely solvable instance $\mc{X}$ in $N$. Then $\mc{X}$ is finitely solvable in $M[G]$, so as it is a model of ZFC, it admits a homomorphism to $\mc{D}$ in $M[G]$. 
			In particular, for some $q \in G $, name $\dot{\mc{X}} \in HS$ with $\dot{\mc{X}}^G=\mc{X}$ there is a name $\dot{h}$
			such that  \[q \Vdash \dot{h} \text{ is a homomorphism from $\dot{\mc{X}}$ to $\mc{D}$}.\] 
			
			The overall strategy is simple: using the cyclic polymorphisms and compactness, from $\dot{h}$ we construct a name $\in HS$ for a homomorphism. 
			
			As $q \in P$ and $\dot{\mc{X}} \in HS$ we can find an $i_0 \geq p^*$ large enough \[\langle \gamma_{p_i,n}:n \in \N,p_i \geq i_0 \rangle \subseteq \stab_{pw}(p) \cap \stab(\dot{\mc{X}}), \] and let $\Delta'=\langle \gamma_{p_i,n}:n \in \N,p_i \geq i_0 \rangle$.  
			
			First we show some technical lemmas. For $q \in B(P)$, $\gamma \in \Delta$ define $Homo_q$ to be the collection of names $\dot{h}$ for which $\dom(\dot{h}) \subseteq \dot{X},$
			\[q \Vdash \dot{h} \text{ is a homomorphism from $\dot{\mc{X}}$ to $\mc{D}$},\]
			and for $\gamma \in \Delta$ let $\inv_{q,\gamma}$ to be the collection of the ones $\dot{h}$ with
			\[q \Vdash \gamma \cdot \dot{h}=\dot{h}.\]
			
			Let us show now the forcing analogue of Claim \ref{cl:obvious}.
			
			\begin{lemma}
				\label{l:invariance}
				Assume that $\dot{h}_0 \in \homo_q$ 
				and $\gamma \in \Delta'$ with $\gamma^p=1$ with $p \geq p_{i_0}$ prime. Then there exists a name $\dot{h} \in \homo_q \cap \inv_{q,\gamma}$.  Moreover, if $\dot{h}_0  \in \inv_{q,\gamma'}$ then $\dot{h} \in \inv_{q,\gamma'}$ for any $\gamma' \in \Delta'$. 
			\end{lemma}	
			\begin{proof}
				By the Symmetry Lemma and the fact that $\gamma \in \stab(\dot{X})$, we have that  $\dot{h}_0 \in \homo_{p}$ is equivalent to
				$\gamma \cdot \dot{h}_0 \in \homo_{\gamma \cdot q}.$
				By $\gamma \in \stab_{pw}(q)$ these are also equivalent to $\gamma \cdot \dot{h}_0 \in \homo_{q}.$ Therefore, all names
				$\dot{h}_0,\gamma \cdot \dot{h}_0,\dots, \gamma^{p-1} \cdot \dot{h}_0$ are forced to be homomorphisms by $q$. Hence, 			
				\[q \Vdash \phi_p(\dot{h}_0,\gamma \cdot \dot{h}_0,\dots, \gamma^{p-1} \cdot \dot{h}_0) \text{ is a homomorphism from $\dot{\mc{X}}$ to $\mc{D}$}.\]
				
				Let $\dot{h}$ be a name for the composition $\phi_p(\dot{h}_0,\gamma \cdot \dot{h}_0,\dots, \gamma^{p-1} \cdot \dot{h}_0)$. Then $\dot{h} \in \homo_q$ and the calculations similar to the ones in Claim \ref{cl:obvious} show both remaining statements of the lemma:
				\[q \Vdash \dot{h}=\phi_p(\dot{h}_0,\gamma \cdot \dot{h}_0,\dots, \gamma^{p-1} \cdot \dot{h}_0)\] is equivalent to
				\[\gamma \cdot q \Vdash \gamma \cdot\dot{h}=\gamma \cdot\phi_p(\gamma \cdot\dot{h}_0,\gamma^2 \cdot \dot{h}_0,\dots, \gamma^{p} \cdot \dot{h}_0),\] which implies
				\[q \Vdash \gamma \cdot\dot{h}=\phi_p(\dot{h}_0,\gamma \cdot \dot{h}_0,\dots, \gamma^{p-1} \cdot \dot{h}_0)=\dot{h},\]
				by the cyclicity of $\phi_p$ and the $\gamma$-invariance of $q$. The moreover part follows from the fact that $\Delta$ is abelian.
				
			\end{proof}		
			Enumerate the generators in $\Delta'$ as $(\gamma_n)_{n \in \N}$. Observe that by the choice of $i_0$, every such $\gamma_n$ has some prime order $p \geq p^*$. Applying Lemma \ref{l:invariance} repeatedly, starting with $\dot{h}$, we obtain a sequence of names $(\dot{h}_n)_{n \in \N}$, with $\dot{h}_n \in \inv_{q,\gamma_i} \cap \homo_q$ for all $i<n$. 
			
			Using the compactness of $\mc{D}^\mc{X}$, in $M[G]$, we have that 
			\[M[G] \models \exists h \in \mc{D}^{\mc{X}} \ h \in \bigcap_n \overline{\{\dot{h}^G_i:i \geq n}\}.\]
			Hence there is some name $\dot{h}$ and $q' \leq q$ form $G$ with 
			\[q' \Vdash \dot{h} \in \bigcap_n \overline{\{\dot{h}_i:i \geq n\}}.\]
			\begin{lemma}
				\label{l:invarianceh}
				$\dot{h} \in \homo_{q'} \cap \inv_{q',\gamma}$, for all $\gamma \in\stab_{pw}(q') \cap \Delta'$. 
			\end{lemma}
			\begin{proof}
				We have to show that 
				\[q' \Vdash \dot{h} \text{ is a homomorphism from $\dot{\mc{X}}$ to $\mc{D}$}\]
				and that for all $\gamma \in \stab_{pw}(q') \cap \Delta'$, we have 
				\[q' \Vdash \gamma \cdot\dot{h}=\dot{h}.\] 
				The first statement is clear, since $q'$ forces that $\dot{h}$ is in the closure of a set of homomorphisms.
				
				In order to see the second statement, we need the next observation.
				
				\begin{claim} For any names $\dot{x},\dot{y}$ we have
					\[q' \Vdash \forall n_0  \ \exists n \geq n_0 \ ((\dot{x} \in \dot{h} \iff \dot{x} \in \dot{h}_n) \land (\dot{y} \in \dot{h} \iff \dot{y} \in \dot{h}_n)).\] 
				\end{claim}
				\begin{proof}
					Fix $\dot{x},\dot{y}$. For any $G' \ni q'$ generic by the definition of the product topology, we have that 
					\[M[G'] \vDash  \forall n_0  \ \exists n \geq n_0 \ ( (\dot{x}^{G'} \in \dot{h}^{G'} \iff \dot{x}^{G'} \in \dot{h}^{G'}_n) \land (\dot{y}^{G'} \in \dot{h}^{G'} \iff \dot{y}^{G'} \in \dot{h}^{G'}_n)),\]
                    so the claim follows.		
				\end{proof}
                Now we show that for every $\dot{x}$ we have $q' \Vdash (\dot{x} \in \gamma \cdot \dot{h} \iff \dot{x} \in \dot{h})$.
                By the definition of the action on names, and the facts that $\dot{h}_n \in \inv_{\gamma,q'}$ and $\gamma \cdot q'=q'$ we get
                \[q' \Vdash\dot{x} \in \dot{h}_n \iff  \dot{x} \in \gamma \cdot \dot{h}_n \iff  \gamma^{-1} \cdot \dot{x} \in \dot{h}_n.\]
                Since the construction of the sequence $(\dot{h}_n)$ has been done in $M$, we get that 
                \[q' \Vdash \forall n \ \dot{x} \in \dot{h}_n \iff \gamma^{-1}  \cdot \dot{x} \in \dot{h}_n.\]
                Applying the claim to $\dot{x}$ and $\gamma^{-1} \cdot \dot{x}$ we get
                \[q' \Vdash \forall n_0  \ \exists n \geq n_0 \ ((\dot{x} \in \dot{h} \iff \dot{x} \in \dot{h}_n) \land (\gamma^{-1}\cdot \dot{x} \in \dot{h} \iff \gamma^{-1}\cdot \dot{x}\in \dot{h}_n)).\] 
                Combining the last two relations, we get 
				\[q' \Vdash \dot{x} \in \dot{h} \iff \gamma^{-1} \cdot \dot{x}\in \dot{h},\]
				so, using the definition of the action and $\gamma \cdot q'=q'$ again we obtain 
                \[q' \Vdash \dot{x} \in \dot{h} \iff \dot{x} \in \gamma \cdot \dot{h}. \] Thus,
				$q' \Vdash \dot{h}=\gamma \cdot \dot{h}.$			\end{proof}

			Now we show that there exists a name $\dot{h'} \in HS\cap \homo_{q'}$.
			
			For each $\dot{x} \in \dom(\dot{h})$  define $\dot{h}'(\dot{x})$ to be $\dot{h}(\dot{x}) \wedge q'$. We claim that $\dot{h}'$ is a suitable choice. First, by Lemma \ref{l:invarianceh} it is clear that \[\dot{h}' \in \bigcap_{\gamma \in \Delta'\cap \stab_{pw}(q')} \inv_{q',\gamma} \cap \homo_{q'},\] since $q' \Vdash \dot{h}'=\dot{h}.$  Assume that for some name $\dot{x}$ we had $\dot{h}'(\dot{x}) \neq (\gamma \cdot \dot{h}')(\dot{x})$. Note that $(\gamma \cdot \dot{h}')(\dot{x})=(\gamma \cdot \dot{h}')(\gamma^{-1} \cdot\dot{x})\leq \gamma \cdot q'=q'$. Thus, there was a $q'' \leq q'$ with $q''\leq \dot{h}'(\dot{x})$ and $q'' \wedge (\gamma \cdot \dot{h}')(\dot{x})=0$, or $q''\leq \gamma \cdot \dot{h}'(\dot{x})$ and $q'' \wedge  \dot{h}'(\dot{x})=0$. But then 
			$q'' \Vdash \dot{x} \in \gamma \cdot \dot{h}'\setminus  \dot{h}' \cup \dot{h}' \setminus \gamma \cdot \dot{h}'$, contradicting that $\dot{h}' \in \inv_{q',\gamma}$. Hence, for each $\gamma \in \Delta' \cap \stab_{pw}(q') \in \mathcal{H}$ we have $\gamma \cdot \dot{h}'=\dot{h}'$, showing that $\dot{h}' \in HS$. 
			
			By $\dot{h}' \in HS$, we have $(\dot{h}')^G \in N$ and it follows from $q' \in G$ and $\dot{h}' \in \homo_{q'}$ that $(\dot{h}')^G$ is a homomorphism from $\dot{\mc{X}}^G$ to $\mc{D}$, concluding the proof of Proposition \ref{pr:mainhard}.
			
		\end{proof}
		Propositions \ref{pr:mainhard} and \ref{p:fails} yield Theorem \ref{t:forcing}.
	\end{proof}

	\section{Open problems}
	As mentioned above, finite reducibility yields a complexity hierarchy on homomorphism problems.  
	\begin{problem}
		Describe the hierarchy of finite reducibility.
	\end{problem}
	It would be also interesting to see the relationship of reduction to other well-known forms of reducibility.
	
	While our work detects a distinction between easy and hard problems, the difference between the strength of different easy problems is yet to be investigated. For example, let $K_{3LIN_2}$ stand for the compactness principle for systems of linear equations over $\mathbb{F}_2$. 
	\begin{problem}
		Is there a model of ZF in which $K_{K_2}$ holds but $K_{3LIN_2}$ does not?
	\end{problem}
	
	Symmetric models are the least sophisticated way to get independence results over ZF. Correspondingly, even very weak forms of choice fail in them. It would be extremely interesting to construct models where some choice principles remain true, but the same split between easy and hard problems can be detected. A deep theory has been developed in the past couple of years to construct models (see, e.g., \cite{larson2020geometric}).
	\begin{problem}
		Is there a model of ZF+DC in which $K_\mc{D}$ holds exactly when $(*)_\mc{D}$ is true?
	\end{problem}
	\noindent \textbf{Acknowledgements.} We would like to thank Lorenz Halbeisen for pointing out the similarity between L\'evy's and Mycielski's theorems with results obtained in \cite{riley,toden}. We are very grateful to Riley Thornton for suggesting the currently included proof of Theorem \ref{t:fin_const_fin_red}. We are also thankful to Amitayu Banerjee, Jan Greb\'ik, Paul Howard, Asaf Karagila, G\'abor Kun, Michael Pinsker, Eleftherios Tachtsis, and Jind\v{r}ich Zapletal for their useful comments and enlightening discussions.

	\bibliographystyle{abbrv}
	\bibliography{bibliography.bib}

\begin{thebibliography}{10}

\bibitem{barto2009constraint}
L.~Barto and M.~Kozik.
\newblock Constraint satisfaction problems of bounded width.
\newblock In {\em 2009 50th Annual IEEE symposium on foundations of computer
  science}, pages 595--603. IEEE, 2009.

\bibitem{barto2012absorbing}
L.~Barto and M.~Kozik.
\newblock Absorbing subalgebras, cyclic terms, and the constraint satisfaction
  problem.
\newblock {\em Logical Methods in Computer Science}, 8, 2012.

\bibitem{barto2009csp}
L.~Barto, M.~Kozik, and T.~Niven.
\newblock The {CSP} dichotomy holds for digraphs with no sources and no sinks
  (a positive answer to a conjecture of bang-jensen and hell).
\newblock {\em SIAM Journal on Computing}, 38(5):1782--1802, 2009.

\bibitem{barto2018wonderland}
L.~Barto, J.~Opr{\v{s}}al, and M.~Pinsker.
\newblock The wonderland of reflections.
\newblock {\em Israel Journal of Mathematics}, 223(1):363--398, 2018.

\bibitem{barto2020topology}
L.~Barto and M.~Pinsker.
\newblock Topology is irrelevant (in a dichotomy conjecture for infinite domain
  constraint satisfaction problems).
\newblock {\em SIAM Journal on Computing}, 49(2):365--393, 2020.

\bibitem{bodirsky2021complexity}
M.~Bodirsky.
\newblock {\em Complexity of Infinite-Domain Constraint Satisfaction}.
\newblock Cambridge University Press, 2021.

\bibitem{bodirsky2006constraint}
M.~Bodirsky and J.~Ne{\v{s}}et{\v{r}}il.
\newblock Constraint satisfaction with countable homogeneous templates.
\newblock {\em Journal of Logic and Computation}, 16(3):359--373, 2006.

\bibitem{bodirsky2015schaefer}
M.~Bodirsky and M.~Pinsker.
\newblock Schaefer's theorem for graphs.
\newblock {\em Journal of the ACM (JACM)}, 62(3):1--52, 2015.

\bibitem{bodirsky2015topological}
M.~Bodirsky and M.~Pinsker.
\newblock Topological {B}irkhoff.
\newblock {\em Transactions of the American Mathematical Society},
  367(4):2527--2549, 2015.

\bibitem{brady2022notes}
Z.~Brady.
\newblock Notes on {CSP}s and {P}olymorphisms.
\newblock {\em arXiv preprint arXiv:2210.07383}, 2022.

\bibitem{bulatov2005classifying}
A.~Bulatov, P.~Jeavons, and A.~Krokhin.
\newblock Classifying the complexity of constraints using finite algebras.
\newblock {\em SIAM journal on computing}, 34(3):720--742, 2005.

\bibitem{bulatov2017dichotomy}
A.~A. Bulatov.
\newblock A dichotomy theorem for nonuniform {CSP}s.
\newblock In {\em 2017 IEEE 58th Annual Symposium on Foundations of Computer
  Science (FOCS)}, pages 319--330. IEEE, 2017.

\bibitem{cowen1990two}
R.~H. Cowen.
\newblock Two hypergraph theorems equivalent to {BPI}.
\newblock {\em Notre Dame Journal of Formal Logic}, 31(2):232--240, 1990.

\bibitem{feder1998computational}
T.~Feder and M.~Y. Vardi.
\newblock The computational structure of monotone monadic {SNP} and constraint
  satisfaction: A study through datalog and group theory.
\newblock {\em SIAM Journal on Computing}, 28(1):57--104, 1998.

\bibitem{grebikv}
J.~Greb{\'i}k and Z.~Vidny{\'a}nszky.
\newblock Borel polymorphisms and complexity.
\newblock {\em in preparation}.

\bibitem{greenwell1974applications}
D.~Greenwell and L.~Lov{\'a}sz.
\newblock Applications of product colouring.
\newblock {\em Acta Mathematica Hungarica}, 25(3-4):335--340, 1974.

\bibitem{halbeisen2012combinatorial}
L.~J. Halbeisen.
\newblock {\em Combinatorial set theory}, volume 121.
\newblock Springer, 2012.

\bibitem{hell1990complexity}
P.~Hell and J.~Ne{\v{s}}et{\v{r}}il.
\newblock On the complexity of {H}-coloring.
\newblock {\em Journal of Combinatorial Theory, Series B}, 48(1):92--110, 1990.

\bibitem{howard1998consequences}
P.~Howard and J.~E. Rubin.
\newblock {\em Consequences of the Axiom of Choice}, volume~59.
\newblock American Mathematical Soc., 1998.

\bibitem{jech2003set}
T.~Jech.
\newblock {\em Set theory: The third millennium edition, revised and expanded}.
\newblock Springer, 2003.

\bibitem{komjath2008ultrafilters}
P.~Komj{\'a}th and V.~Totik.
\newblock Ultrafilters.
\newblock {\em The American Mathematical Monthly}, 115(1):33--44, 2008.

\bibitem{kun2012linear}
G.~Kun, R.~O'Donnell, S.~Tamaki, Y.~Yoshida, and Y.~Zhou.
\newblock Linear programming, width-1 {CSP}s, and robust satisfaction.
\newblock In {\em Proceedings of the 3rd Innovations in Theoretical Computer
  Science Conference}, pages 484--495, 2012.

\bibitem{ladner1975structure}
R.~E. Ladner.
\newblock On the structure of polynomial time reducibility.
\newblock {\em Journal of the ACM (JACM)}, 22(1):155--171, 1975.

\bibitem{larson2020geometric}
P.~B. Larson and J.~Zapletal.
\newblock {\em Geometric set theory}, volume 248.
\newblock American Mathematical Soc., 2020.

\bibitem{lauchli1971coloring}
H.~L{\"a}uchli.
\newblock Coloring infinite graphs and the {B}oolean prime ideal theorem.
\newblock {\em Israel Journal of Mathematics}, 9:422--429, 1971.

\bibitem{levy1963remarks}
A.~L{\'e}vy.
\newblock Remarks on a paper by {J}. {M}ycielski.
\newblock {\em Acta Mathematica Hungarica}, 14(1-2):125--130, 1963.

\bibitem{maroti2008existence}
M.~Mar{\'o}ti and R.~McKenzie.
\newblock Existence theorems for weakly symmetric operations.
\newblock {\em Algebra universalis}, 59(3-4):463--489, 2008.

\bibitem{mottet2022smooth}
A.~Mottet and M.~Pinsker.
\newblock Smooth approximations and csps over finitely bounded homogeneous
  structures.
\newblock New York, NY, USA, 2022. Association for Computing Machinery.

\bibitem{mycielski1961some}
J.~Mycielski.
\newblock Some remarks and problems on the colouring of infinite graphs and the
  theorem of {K}uratowski.
\newblock {\em Acta Mathematica Hungarica}, 12(1-2):125--129, 1961.

\bibitem{pinsker2022dilemmas}
M.~Pinsker.
\newblock Current challenges in infinite-domain constraint satisfaction:
  Dilemmas of the infinite sheep.
\newblock In {\em 2022 IEEE 52nd International Symposium on Multiple-Valued
  Logic (ISMVL)}, pages 80--87, 2022.

\bibitem{rorabaugh2017logical}
D.~Rorabaugh, C.~Tardif, and D.~Wehlau.
\newblock {Logical compactness and constraint satisfaction problems}.
\newblock {\em {Logical Methods in Computer Science}}, {Volume 13, Issue 1},
  2017.

\bibitem{schaefer1978complexity}
T.~J. Schaefer.
\newblock The complexity of satisfiability problems.
\newblock In {\em Proceedings of the tenth annual ACM symposium on Theory of
  computing}, pages 216--226, 1978.

\bibitem{riley}
R.~Thornton.
\newblock An algebraic approach to {B}orel csps, 2022.

\bibitem{toden}
S.~Todor\v{c}evi\'c and Z.~Vidny\'anszky.
\newblock A complexity problem for {B}orel graphs.
\newblock {\em Invent. Math.}, 226:225--249, 2021.

\bibitem{zhuk2020proof}
D.~Zhuk.
\newblock A proof of the {CSP} dichotomy conjecture.
\newblock {\em Journal of the ACM (JACM)}, 67(5):1--78, 2020.

\end{thebibliography}

\section*{Appendix}

\begin{proof}[Proof of Theorem \ref{t:fin_const_fin_red} (B)]

By definition, we have $E=D^n$, and we recall from (\ref{e:simp_def_witness_2}) that for every $k\in\om$ and relation symbol $R$ in $\Sigma_\fE$ there are relations symbols $\alpha_{R,i}$ in $\Sigma_\fD\cup\{=\}$ such that
        \begin{equation*}
        (\ovl z_1,\ldots,\ovl z_k)\in R^\fE \iff \exists \ovl w \bigwedge_{i=1}^{m_R}{\alpha_{R,i}}^\fD(\ovl z_1,\ldots,\ovl z_k,\ovl w).
        \end{equation*}
        Here the $\ovl z_j$ are $n$-tuples, and $\ovl w$ is an $r$-tuple of variables (where $r$ depends on $R$). Recall that the $\alpha_{R,i}$ are not necessarily $(kn+r)$-ary relations. (e.g., some of them may be the equality). We may assume that for every $s\leq k$, $j\leq n$, and $l,l'\leq r$ none of the $\alpha_{R,i}$ are of the form $\ovl z_s(j)=\ovl w(l)$ or $w(l)=w(l')$ because otherwise we could eliminate the variable $\ovl w(l)$.
        
        Let $\fX$ be an instance of $\fE$. We need to construct $\Gamma(\fX)$. First, let us define an instance $\wtilde\fX$ of $\fD$. Let $\wtilde X$ contain the following elements:
        \begin{itemize}
        \item an element $\ovl x(j)$ for every $x\in X$ and $j\leq n$;
        \item an element $\ovl w_{R,\ovl x}(j)$ for every relation symbol $R$ in $\Sigma_\fE$, tuple $\ovl x\in R^\fX$, and $j\leq r$, where $r$ is the arity of $\ovl w$ in (\ref{e:simp_def_witness_2}).
        \end{itemize}
        Now we define the relations. For every $k\in\om$, relation symbol $R\in\Sigma_\fE$ of arity $k$, tuple $(x_1,\ldots,x_k)\in R^\fX$, and index $i\leq m_R$, if $\alpha_{R,i}$ is not the equality and it depends (formally) exactly on the variables $\ovl z_{s_1}(j_1),\ldots,\ovl z_{s_l}(j_l),\ovl w(t_1),\ldots,\ovl w(t_v)$, then let
        $$\left(\ovl x_{s_1}(j_1),\ldots,\ovl x_{s_l}(j_l),\ovl w_{R,(x_1,\ldots,x_k)}(t_1),\ldots,\ovl w_{R,(x_1,\ldots,x_k)}(t_v)\right)$$
        be an element of ${\alpha_{R,i}}^{\wtilde\fX}$. Relation symbols not occurring this way are interpreted in $\wtilde \fX$ as empty relations.
        
        Let us define a graph $G_\fX$ on $\wtilde X$. For every relation $R^\fX$ on $\fX$ and tuple $(x_1,\ldots,x_k)\in R^\fX$ we join $\ovl x_s(j)$ and $\ovl x_t(l)$ with an edge if one of the relations $\alpha_{R,i}$ is the equality $\ovl z_s(j)=\ovl z_t(l)$. For $x,y\in \wtilde X$ let $x\sim_\fX y$ if and only if $x$ and $y$ are in the same connected component of $G_\fX$. This is an equivalence relation. Let $[x]$ denote the $\sim_\fX$-class of $x$. Note that elements of the form $w_{R,\ovl x}(j)$ are alone in their $\sim_\fX$-class.
        
        Now we define $\Gamma(\fX)$. Let $\dom(\Gamma(\fX))$ contain the following elements:
        \begin{itemize}
        \item the equivalence class $[\ovl x(j)]$ for each $x\in X$ and $j\leq n$,
        \item the elements of the form $\ovl w_{R,\ovl x}(j)$ from $\wtilde \fX$.
        \end{itemize}
        For every relation symbol $R$ in $\Sigma_\fE$ and $i\leq m_R$, if $\alpha_{R,i}$ is not the equality, then let
        ${\alpha_{R,i}}^{\Gamma(\fX)}$ be the following
        $$\left\{
        \begin{array}{ll}
            \left([\ovl x_{s_1}(j_1)],\ldots,[\ovl x_{s_l}(j_l)],\ovl w_{R,(x_1,\ldots,x_k)}(t_1),\ldots,\ovl w_{R,(x_1,\ldots,x_k)}(t_v)\right):\ \\
            \left(\ovl x_{s_1}(j_1),\ldots,\ovl x_{s_l}(j_l),\ovl w_{R,(x_1,\ldots,x_k)}(t_1),\ldots,\ovl w_{R,(x_1,\ldots,x_k)}(t_v)\right)\in{\alpha_{R,i}}^{\wtilde \fX}
        \end{array}
        \right\}.$$
        We interpret the remaining relation symbols of $\Sigma_\fD$ as empty relations. Clearly, (1) of Definition~\ref{d:finitely_reduces} is satisfied. To check (2) we need to transfer homomorphisms.
        
        Let $\varphi:\fX\to \fE$ be a homomorphism. First, let us define a map $\wtilde\varphi:\wtilde X\to\fD$. Let $u\in\wtilde X$.

        \textbf{Case 1.} We have $u=\ovl x(j)$ for some $x\in \fX$ and $j\leq n$. Then set $\wtilde\varphi(\ovl x(j))=\varphi(x)(j)$.

        \textbf{Case 2.} We have $u=\ovl w_{R,(x_1,\ldots,x_k)}(j)$ for some $R\in\Sigma_\fE$, $(x_1,\ldots,x_k)\in R^{\fX}$, and $j\leq r$. Since $\varphi$ is a homomorphism, we have $(\varphi(x_1),\ldots,\varphi(x_k))\in R^\fE$, hence we can choose a tuple $\ovl w_{R,(\varphi(x_1),\ldots,\varphi(x_k))}$ that witnesses this in (\ref{e:simp_def_witness_2}). Note that this choice can be made in ZF because $\fE$ is finite and $\ovl w_{R,(\varphi(x_1),\ldots,\varphi(x_k))}$ depends only on $(\varphi(x_1),\ldots,\varphi(x_k))$, not $(x_1,\ldots,x_k)$. Set $\wtilde \varphi(u)=\ovl w_{R,(\varphi(x_1),\ldots,\varphi(x_k))}(j)$.
        
        \textbf{Claim 1.} The map $\wtilde\varphi$ is $\sim_\fX$-invariant.

        Fix $\ovl x(j),\ovl y(l)\in \wtilde\fX$ with $\ovl x(j)\sim_\fX\ovl y(l)$. Then there is a path $\ovl x(j)=\ovl x_0(j_0), \ovl x_1(j_1),\ldots,\ovl x_n(j_n)=\ovl y(l)$ in $G_\fX$. By the definition of $G_\fX$, for each $0\leq q<n$ there is an arity $k$, a relation $R$ of arity $k$, a tuple $(x^*_1,\ldots,x^*_k)\in R^{\fX}$, an index $0\leq i<m_R$, and coordinates $s,t\leq k$ such that $ x_q=x^*_s$, $x_{q+1}=x^*_t$ and $\alpha_{R,i}$ is $\ovl z_s(j_q)=\ovl z_t(j_{q+1})$. Since $\varphi$ is a homomorphism, we have $(\varphi(x^*_1),\ldots,\varphi(x^*_k))\in R^\fE$. Then
        $$\wtilde\varphi(\ovl x_q(j_q))=\wtilde\varphi(\ovl x^*_s(j_q))=\varphi(x^*_s)(j_q)=$$
        $$=\varphi(x^*_t)(j_{q+1})=\wtilde\varphi(\ovl x^*_t(j_{q+1}))=\wtilde\varphi(\ovl x_{q+1}(j_{q+1})),$$
        where the third equality follows from (\ref{e:simp_def_witness_2}) and the fact that $\alpha_{R,i}$ is $\ovl z_s(j_q)=\ovl z_t(j_{q+1})$. Applying this to every $0\leq q<n$ we conclude that $\wtilde\varphi(\ovl x(j))=\wtilde\varphi(\ovl y(l))$, which proves Claim 1.
        
        Now we define the homomorphism $\Phi(\varphi):\Gamma(\fX)\to\fD$. Let $u\in\Gamma(\fX)$.
        
        \textbf{Case 1.} We have $u=[\ovl x(j)]$ for some $x\in\fX$ and $j\leq n$. Set $\Phi(\varphi)(u)=\wtilde\varphi(\ovl x(j))=\varphi(x)(j)$. This is well-defined by Claim 1. Alternatively, to see that \textit{we do not need to choose} a representative element from $[\ovl x(j)]$, we can set $\Phi(\varphi)(u)=\bigcup\wtilde\varphi([\ovl x(j)])$, that is, the element of the singleton $\wtilde\varphi([\ovl x(j)])$.
        
        \textbf{Case 2.} We have $u=\ovl w_{R,(x_1,\ldots,x_k)}$ for some $R\in\Sigma_\fE$, $(x_1,\ldots,x_k)\in R^\fX$, and $j\leq n$. Set $\Phi(\varphi)(u)=\wtilde\varphi(u)=\ovl w_{R,(\varphi(x_1),\ldots,\varphi(x_k))}(j)$
        
        We need to verify that $\Phi(\varphi)$ is a homomorphism. It suffices to check those relation symbols in $\Sigma_\fD$ that are of the form $\alpha_{R,i}$ since other relation symbols are interpreted in $\Gamma(\fX)$ as empty relations.

        The image of a tuple
        $$\left([\ovl x_{s_1}(j_1)],\ldots,[\ovl x_{s_l}(j_l)],\ovl w_{R,(x_1,\ldots,x_k)}(t_1),\ldots,\ovl w_{R,(x_1,\ldots,x_k)}(t_v)\right)\in{\alpha_{R,i}}^{\Gamma(\fX)}$$
        is, by definition,
        $$(\varphi(x_{s_1})(j_1),\ldots,\varphi(x_{s_l})(j_l),\ovl w_{R,(\varphi(x_1),\ldots,\varphi(x_k))}(t_1),\ldots,\ovl w_{R,(\varphi(x_1),\ldots,\varphi(x_k))}(t_v)),$$
        which is in ${\alpha_{R,i}}^\fD$ by the choice of the tuple $\ovl w_{R,(\varphi(x_1),\ldots,\varphi(x_k))}$.

        Now let $\psi:\Gamma(\fX)\to\fD$ be a homomorphism. Set $\Psi(\psi)(x)=(\psi([\ovl x(1)]),\ldots,\psi([\ovl x(n)]))$ for every $x\in \fX$. Let $R$ be a relation symbol in $\Sigma_\fE$, and $(x_1,\ldots,x_k)\in R^\fX$. Then, for every relation symbol of the form $\alpha_{R,i}$ that is not the equality the tuple
        \begin{equation}\label{e:rel_in_gamma_X}
            \left([\ovl x_{s_1}(j_1)],\ldots,[\ovl x_{s_l}(j_l)],\ovl w_{R,(x_1,\ldots,x_k)}(t_1),\ldots,\ovl w_{R,(x_1,\ldots,x_k)}(t_v)\right)\tag{$\star$}
        \end{equation}
        is in ${\alpha_{R,i}}^{\Gamma(\fX)}$ by the definition of ${\alpha_{R,i}}^{\Gamma(\fX)}$. If, on the other hand, $\alpha_{R,i}$ is of the form $\ovl z_s(j)=\ovl z_t(l)$ for some $s,t\leq k$ and $j,l\leq n$, then $\ovl x_s(j)$ and $\ovl x_t(l)$ are joined by an edge in $G_\fX$. In particular, $[\ovl x_s(j)]=[\ovl x_t(l)]$, hence (\ref{e:rel_in_gamma_X}) is in ${\alpha_{R,i}}^{\Gamma(\fX)}$ again. Since $\psi$ is a homomorphism, we obtain that
        $$(\psi([\ovl x_{s_1}(j_1)]),\ldots,\psi([\ovl x_{s_l}(j_l)]),\psi(\ovl w_{R,(x_1,\ldots,x_k)}(t_1)),\ldots,\psi(\ovl w_{R,(x_1,\ldots,x_k)}(t_v)))$$
        is in ${\alpha_{R,i}}^\fD$ for every $i\leq m_R$. (Here the indices $s_1,\ldots,s_l,j_1,\ldots,j_l, t_1,\ldots,t_v$ depend on $i$.)
        That is, $\psi(\ovl w_{R,(x_1,\ldots,x_k)}(1)),\ldots,\psi(\ovl w_{R,(x_1,\ldots,x_k)}(r))$ witness that $(\Psi(\psi)(x_1),\ldots,\Psi(\psi)(x_k))\in R^\fE$. 

        It remains to prove that the above constructed $\Gamma$, $\Phi$ and $\Psi$ satisfy (3) of Definition~\ref{d:finitely_reduces}. By Remark~\ref{r:suff_for_fin_red}, it suffices to show that for every finite substructure $\fH$ of $\Gamma(\fX)$ there is a finite substructure $\fF$ of $\fX$ such that there exists an $\fH\to\Gamma(\fF)$ homomorphism.

        Fix a finite substructure $\fH$ of $\Gamma(\fX)$. First, we define a finite set $F'\subseteq\wtilde X$. All relations of $\fH$ are of the form ${\alpha_{R,i}}^{\Gamma(\fX)}|_\fH$. For each $\alpha_{R,i}$ and $\left([\ovl x_{s_1}(j_1)],\ldots,[\ovl x_{s_l}(j_l)],\ovl w_{R,(x_1,\ldots,x_k)}(t_1),\ldots,\ovl w_{R,(x_1,\ldots,x_k)}(t_v)\right)\in{\alpha_{R,i}}^\fH$ we add elements to $F'$ as follows. If $v\geq 1$, then we add $\ovl x_1(1),\ldots,\ovl x_1(n),\ldots,\ovl x_k(1),\ldots,\ovl x_k(n)$ to $F'$ (we read them off from the index of $\ovl w_{R,(x_1,\ldots,x_k)}(t_1)$). If $v=0$, then we \emph{choose} elements $\ovl x_1(1),\ldots,\ovl x_1(n),\ldots,\ovl x_k(1),\ldots,\ovl x_k(n)$ such that $(x_1,\ldots,x_k)\in R^\fX$ and $\ovl x_{s_1}(j_1),\ldots,\ovl x_{s_l}(j_l)$ represent the classes $[\ovl x_{s_1}(j_1)],\ldots,[\ovl x_{s_l}(j_l)]$ respectively. 
        
        \textbf{Remark.} We need to treat these cases separately because whenever witnesses are present they correspond to specific representatives, and we need to pick the right representatives for them to be in relation with the witnesses. Reading the representatives off from the index of $\ovl w$ is not done to avoid Choice, it is done to ensure the tuple is related in $\wtilde \fX$.
        
        Now $F'$ has finitely many elements. Add finitely many more elements to $F'$ to ensure that for every $\ovl x(j),\ovl y(l)\in F'$ if $\ovl x(j)\sim_\fX \ovl y(l)$, then there is a path from $\ovl x(j)$ to $\ovl y(l)$ in $F'$. Thus, the definition of $F'$ involves choosing finitely many elements from $\fX$ and $\wtilde\fX$, which can be done in ZF.
        
        Finally, we define $F$. First, let $F$ contain every element $x\in\fX$ for which there is $j\leq n$ such that $x(j)\in F'$. Second, if two elements $\ovl x(j),\ovl y(l)\in F'$ are connected by an edge in $G_\fX$, we add finitely many more elements to $F$ to ensure that they are connected in $G_\fF$ as well (see the definition of $G_\fX$). This involves choosing finitely many elements from $\fX$.

        Now we define a homomorphism $\vartheta:\fH\to\Gamma(\fF)$. Let $u\in\fH$.


        \textbf{Case 1.} The element $u$ does not occur as a coordinate in any tuple in any of the relations ${\alpha_{R,i}}^\fH$. Then pick $\vartheta(u)\in\Gamma(\fF)$ arbitrarily. This involves choosing finitely many elements from $\Gamma(\fF)$.

        \textbf{Case 2.} For some relation symbol $\alpha_{R,i}$ the element $u$ occurs as a coordinate of a tuple
        $$\left([\ovl x_{s_1}(j_1)],\ldots,[\ovl x_{s_l}(j_l)],\ovl w_{R,(x_1,\ldots,x_k)}(t_1),\ldots,\ovl w_{R,(x_1,\ldots,x_k)}(t_v)\right)\in{\alpha_{R,i}}^\fH.$$
        \textbf{Subcase 2.1.} We have $v\geq 1$. If $u=\ovl w_{R,(x_1,\ldots,x_k)}(t_m)$ for some $m\leq v$, then set $\vartheta(u)=u$, which is an element of $\Gamma(\fF)$ since $(x_1,\ldots,x_k)\in R^\fX$. If $u=[\ovl x_{s_p}(j_p)]$ for some $p\leq l$, then set $\vartheta(u)=[\ovl x_{s_p}(j_p)]_\fF$, which is the $\sim_\fF$-class of $\ovl x_{s_p}(s_p)$. As before, this does not involve choice since we simply read off the representatives from the index of $\ovl w_{R,(x_1,\ldots,x_k)}$. The map $\vartheta$ is well-defined by the construction of $F$: $\sim_\fX$-equivalent elements in $F'$ are $\sim_\fF$-equivalent as well.

        \textbf{Subcase 2.2.} We have $v=0$. Then $u=[\ovl x_{s_p}(j_p)]$ for some $p\leq l$. Set $\vartheta(u)=[\ovl x_{s_p}(j_p)]_\fF$, where $\ovl x_{s_p}(j_p)$ is the representative already chosen in the definition of $F'$ in the case $v=0$. Again, this is well-defined because $\sim_\fX$-equivalent elements in $F'$ are $\sim_\fF$-equivalent as well.

        Now the image of a tuple
        $$\left([\ovl x_{s_1}(j_1)],\ldots,[\ovl x_{s_l}(j_l)],\ovl w_{R,(x_1,\ldots,x_k)}(t_1),\ldots,\ovl w_{R,(x_1,\ldots,x_k)}(t_v)\right)\in{\alpha_{R,i}}^\fH$$
        under $\vartheta$ is
        $$\left([\ovl x_{s_1}(j_1)]_\fF,\ldots,[\ovl x_{s_l}(j_l)]_\fF,\ovl w_{R,(x_1,\ldots,x_k)}(t_1),\ldots,\ovl w_{R,(x_1,\ldots,x_k)}(t_v)\right),$$
        which is in ${\alpha_{R,i}}^{\Gamma(\fF)}$ because of the definition of $\Gamma$ and the fact that $(x_1,\dots,x_k)\in R^\fF$. (If $v\geq 1$, then $(x_1,\ldots,x_k)$ comes from the index of $\ovl w_{R,(x_1,\ldots,x_k)}(t_1)$, otherwise it was chosen in the definition of $F'$.) Thus $\vartheta$ is a homomorphism.
        
        The proof of Theorem~\ref{t:fin_const_fin_red} is now complete.
    
\end{proof}
 
\end{document}